\def\E{\mathds{E}}
\def\N{\mathds{N}}
\def\P{\mathds{P}}
\def\R{\mathds{R}}
\def\V{\mathds{V}}
\def\id{\mathds{1}}
\newcommand{\Per}{\mathop{\rm Per}\nolimits}
\newcommand{\Div}{\mathop{\rm div}\nolimits}
\def\bm #1{{\boldsymbol{#1}}}
\def\cB{{\mathcal B}}
\def\cF{{\mathcal F}}
\def\cH{{\mathcal H}}
\def\b{\beta}
\def\d{\delta}
\def\e{\varepsilon}
\def\k{{\kappa}}
\def\l{\lambda}
\def\s{\sigma}
\def\bd{{\partial}}
\newtheorem{theorem}{Theorem}[section]
\newtheorem{lemma}[theorem]{Lemma}
\begin{document}

\title{\bfseries Poisson-Delaunay approximation}

\author{Matthias Reitzner and Anna Strotmann}

\date{}

\maketitle

\begin{abstract}
For a Borel set $A$  and a stationary Poisson point process  $\eta_t$  in $\R^d$ of intensity $t>0$, the
Poisson-Delaunay approximation $ A_{\eta_t}$ of $A$  is the union of all Delaunay cells generated by $\eta_t$ with center in $A$. It is shown that $\l_d(A_{\eta_t})$ is an unbiased estimator for $\l_d(A)$, variance bounds and a quantitative central limit theorem are given. The asymptotic behaviour of the symmetric difference $\l_d(A\Delta A_\eta)$ is derived as $t \to\infty$.

\bigskip
\noindent\\
{\bf Keywords}. Poisson-Delaunay tessellation, Poisson-Delaunay cell, Poisson point process, approximation of sets\\
{\bf MSC 2010}. Primary: 52B05, 52C10; Secondary: 52A20, 60D05.
\end{abstract}

\section{Introduction}

Triangulations play a central role in many methods of numerical data evaluation, for topographic maps, sensor technology, and much more, see e.g. \cite{topographic, sensor}. Due to their in many ways optimal properties the Delaunay triangulation is particularly suitable, see \cite{Rajan}. In this work we consider the Delaunay tessellation in $\R^d$ based on a random point set. Random tessellations like this are an important part in stochastic geometry and have applications in many different fields.

We use the random Delaunay tessellation to approximate an unknown set. Thinking of the simplices as pixels, there is an obvious connection to image analysis, quantization problems, and non-parametric statistics, see for example \cite{image}, \cite{GrLu} and \cite[Section~3]{EiKh}. And, clearly, approximating a set by random Delaunay simplices gives rise to a randomized numerical integration, estimating Lebesgue measure of an unknown set by the volume of the corresponding Delaunay simplices.

Let $\eta_t$ be a stationary Poisson point process in $\R^d$ with intensity $t > 0$.  A $(d+1)$-tuple of points in ${\eta_t}$ forms a Delaunay cell, defined as the convex hull of these points, if its circumball does not contain any further point from $\eta_t$. The collection $D_{\eta_t}$ of all Delaunay cells is called the Poisson-Delaunay tessellation of $\R^d$. The center $\bm c$ of a Delaunay cell is the center of the circumball. Let $C(D_{\eta_t})$ be the set of all centers of Delaunay cells $D_{\eta_t}(\bm c)$ of $\eta_t$, hence 
$$
D_{\eta_{t}}:=\bigcup_{\bm c  \in C(D_{\eta_t}) } D_{\eta_t}(\bm c) .
$$
Precise definitions are given in the next section.

For a Borel set $A \subset\R^d$, we define the Poisson-Delaunay approximation $A_{\eta_{t}}$ of $A$ as the union of all Delaunay cells with center inside of the set $A$ (see Figure \ref{approx}), 
\begin{equation}\label{def:PoisDel}
A_{\eta_{t}} :=\bigcup_{\bm c  \in C(D_{\eta_t}) \cap A} D_{\eta_t}(\bm c) .
\end{equation}
This yields a random approximation of the set $A$. The aim of this paper is to investigate how well this construction determines the volume of a Borel set and to measure the deviation. In particular, in Theorem \ref{thm:exp} we will see that $\E \l_d(A_{\eta_t})= \l_d(A)$ and thus $\l_d(A_{\eta_t})$ is an unbiased estimator of the possible unknown set $A$. In Theorems \ref{thm:var} and \ref{thm:CLT} we will state variance bounds and a central limit theorem for $\l_d(A_{\eta_t})$ as $t\to \infty$ if $A$ is convex. Finally, in Theorem \ref{thm:sym} we give bounds on the symmetric difference between $A$ and $A_{\eta_t}$.

\textbf{\begin{figure}[ht]
		\centering
		\includegraphics[scale=0.25]{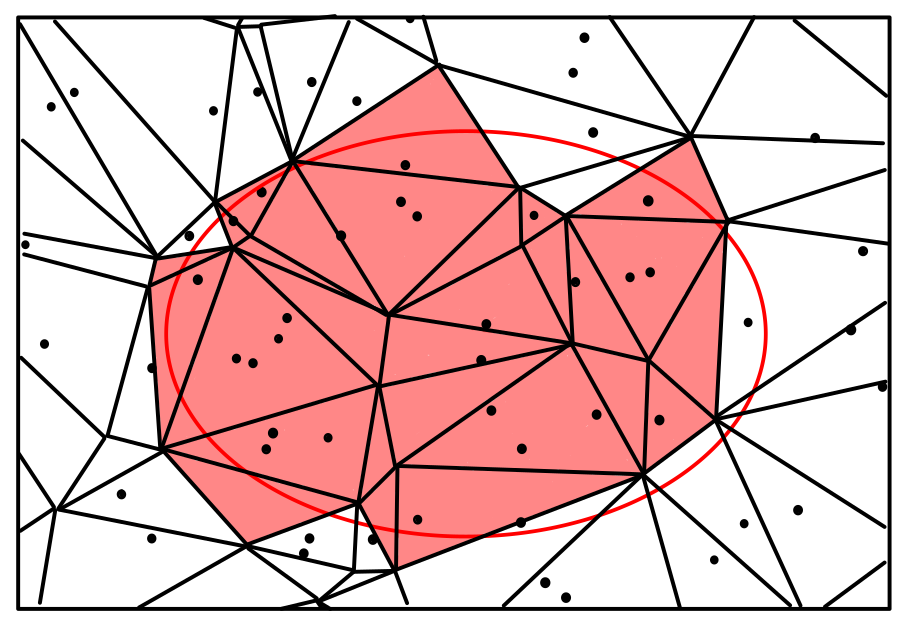}
		\caption{Poisson-Delaunay approximation of an ellipse.}
		\label{approx}
\end{figure}}

It is well known that the construction of the Delaunay tessellation is dual to the construction of the Voronoi tessellation. The Voronoi cell $V_{\eta_{t}}(\bm x)$ of a point $\bm x\in\eta_t$ is defined as the set of all points, which are closer to $\bm x$ than any other point of the  Poisson point process.
The union of all Voronoi cells $V_{\eta_{t}}(\bm x)$, $\bm x\in\eta_t$, is called the Poisson-Voronoi tessellation of $\R^d$, which we denote by $V_{\eta_t}$. Let ${\mathcal{F}_{0}(V_{\eta_{t}})}$ be the set of all vertices of the Voronoi tessellation. An alternative way of defining the Delaunay tessellation via the Voronoi cells is to define for a vertex $\bm c \in \mathcal{F}_{0}(V_{\eta_{t}})$ the associated Delaunay cell as the convex hull of the centers of those Voronoi cells which have the point $\bm c$ as a vertex. Since a Poisson-Voronoi mosaic is normal, i.e. every vertex belongs almost surely to $d+1$ cells, each Delaunay cell has exactly $d+1$ vertices. Therefore, it is almost surely a simplex. The Poisson-Delaunay tessellation thus is dual to the Poisson-Voronoi tessellation and the set of all centers $C(D_{\eta_t})$ satisfies $\mathcal{F}_{0}(V_{\eta_{t}})=C(D_{\eta_t})$. 

Quite similar to the approximation described above, one can approximate a Borel set $A$ using the Voronoi tessellation. Such an approximation, which is the union of all Voronoi cells with center inside of $A$, is called Poisson-Voronoi approximation. The quality of the Poisson-Voronoi approximation was already extensively investigated. There are several publications dealing with variance bounds, see \cite{HevRei}, \cite{Schulte}, \cite{ThYu} and \cite{Yu}, central limit theorems, i.e. in \cite{Schulte}, \cite{Yu} and \cite{LaSchuYu}, and concentration inequalities, see \cite{HevRei}. In addition, properties of the symmetric difference of the Poisson-Voronoi approximation were investigated for example in \cite{HevRei}, \cite{ReiSpoZap}, \cite{LaSchuYu} and \cite{Yu}. In this work we want to investigate the dual Poisson-Delaunay approximation which has been unexplored so far, and check whether it is equally suitable.

In the next section, we will present our results. This is followed by a section that lists the relevant notations and tools. Finally, the proofs of the main results are presented in Sections \eqref{Esti}-\eqref{Sym}.

\section{Main results}

A short computation shows that the Poisson-Delaunay approximation $A_{\eta_t}$ as well as the Voronoi approximation is an unbiased estimator for the volume of $A$.

\begin{theorem}
\label{thm:exp}
Let $A$ be a Borel set. Then 
$$
\E \l_d (A_{\eta_t})=\l_d(A).
$$
\end{theorem}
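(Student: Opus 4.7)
The plan is to interchange volume integration and expectation via Fubini and then use the stationarity of the Poisson-Delaunay tessellation to reduce the claim to translation invariance of Lebesgue measure. Since the Delaunay cells tile $\R^d$ with almost surely disjoint interiors, for almost every realization of $\eta_t$ and for $\l_d$-almost every $\bm x\in\R^d$ there is a unique Delaunay cell containing $\bm x$; I denote its center by $\bm c(\bm x)$. By the definition \eqref{def:PoisDel} of $A_{\eta_t}$ one then has $\id[\bm x\in A_{\eta_t}] = \id[\bm c(\bm x)\in A]$ almost surely, so Fubini gives
$$
\E\l_d(A_{\eta_t}) \;=\; \int_{\R^d}\P(\bm c(\bm x)\in A)\,d\bm x.
$$

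Because $\eta_t$ is stationary, the whole Delaunay tessellation is stationary, hence the distribution of $\bm c(\bm x)-\bm x$ does not depend on $\bm x$; call this distribution $\nu$. Then $\P(\bm c(\bm x)\in A)=\nu(A-\bm x)$, and a further application of Fubini together with translation invariance of Lebesgue measure yields
$$
\E\l_d(A_{\eta_t}) \;=\; \int_{\R^d}\nu(A-\bm x)\,d\bm x \;=\; \int_{\R^d}\int_{\R^d}\id[\bm x+\bm y\in A]\,\nu(d\bm y)\,d\bm x \;=\; \l_d(A).
$$

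The only delicate point is measurability: one must justify that $A_{\eta_t}$ is a random Borel set and that $(\omega,\bm x)\mapsto\bm c(\bm x)$ is jointly measurable on a set of full product measure, so that Fubini applies and so that the stationarity argument for the distribution of $\bm c(\bm x)-\bm x$ is meaningful. This is standard for the normal stationary random tessellation generated by a Poisson input, but deserves a remark. An alternative route, equivalent but heavier, is to write $\l_d(A_{\eta_t})=\sum_{\bm c\in C(D_{\eta_t})}\id[\bm c\in A]\,\l_d(D_{\eta_t}(\bm c))$ and apply the refined Campbell theorem to the stationary point process of Delaunay centers, using the identity that the product of the intensity of centers and the mean volume of the typical Delaunay cell equals one (which in turn expresses the tiling property). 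Either way, the main expected obstacle is purely bookkeeping: the heart of the proof is simply that stationary random tessellations preserve Lebesgue volume in the mean.
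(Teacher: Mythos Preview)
Your argument is correct and takes a genuinely different route from the paper. The paper computes $\E\l_d(A_{\eta_t})$ directly: it writes the volume as a sum over $(d{+}1)$-tuples of Poisson points forming Delaunay cells, applies the multivariate Mecke formula \eqref{eq:Mecke}, then uses the Blaschke-Petkantschin transformation \eqref{eq:Bla} to factor the integral into $\l_d(A)$ times an explicit constant that is shown to equal $1$ via the identity $S(d,d,2)=\tfrac{d+1}{(d-1)!}\kappa_d^{d+1}$. Your approach bypasses all of this by observing that, once one knows the Delaunay cells tile $\R^d$ with a.s.\ disjoint interiors, the statement is a formal consequence of stationarity and translation invariance of $\l_d$; in particular it uses nothing specific to the Poisson-Delaunay construction and would apply verbatim to any stationary random face-to-face tessellation with a measurable center map. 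What the paper's computation buys is that the same machinery (Mecke plus Blaschke--Petkantschin) is reused throughout the later proofs of Theorems~\ref{thm:var}--\ref{thm:sym}, so the explicit calculation here doubles as a warm-up; what your argument buys is conceptual clarity and robustness, at the minor cost of the measurability bookkeeping you flag (which is indeed routine for normal stationary tessellations).
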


The next theorem is a quantitative central limit theorem, giving an upper bound for the Kolmogorov distance between the standardized Poisson-Delaunay approximation and a standard Gaussian random variable. 

\begin{theorem}
	\label{thm:CLT}
	Let $A$ be a compact convex set with interior points, and $N$ a standard Gaussian random variable. Then
	$$
	d_K\left(\frac{\l_d(A_{\eta_t}) -\E  \l_d(A_{\eta_t}) }{\sqrt{\V \l_d(A_{\eta_t}) }},N\right)\le c_{d,A} t^{-1/2+1/(2d)}.
	$$
\end{theorem}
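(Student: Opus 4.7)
The plan is to apply a Malliavin--Stein type Kolmogorov bound for Poisson functionals (in the spirit of Last, Peccati and Schulte) to $F=\l_d(A_{\eta_t})$. Such a bound controls $d_K\bigl((F-\E F)/\sqrt{\V F},N\bigr)$ by integrals of moments of the first and second order difference operators
\[
D_{\bm x}F = F(\eta_t\cup\{\bm x\})-F(\eta_t),\qquad D^2_{\bm x,\bm y}F = D_{\bm x}D_{\bm y}F,
\]
divided by suitable powers of $\sigma=\sqrt{\V F}$. Combining this with the variance lower bound of order $t^{-1-1/d}$ supplied by Theorem \ref{thm:var}, the target rate $t^{-1/2+1/(2d)}$ will emerge once one shows that the bulk $L^3$-mass of $D_{\bm x}F$ is concentrated in a boundary layer of width $O(t^{-1/d})$ around $\partial A$ and that each first difference has typical size $O(t^{-1})$.

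The key geometric step is to analyze $D_{\bm x}F$ via the classical Delaunay-flip description: inserting a point $\bm x$ into $\eta_t$ destroys exactly those Delaunay cells whose circumball contains $\bm x$ and creates new cells having $\bm x$ as a vertex, while the union of cells still tiles $\R^d$. Hence
\[
D_{\bm x}F \;=\; \sum_{\substack{\text{new cells}\\ \text{center}\in A}}\l_d(\cdot) \;-\; \sum_{\substack{\text{destroyed cells}\\ \text{center}\in A}}\l_d(\cdot),
\]
and as soon as all affected circumcenters lie on the same side of $\partial A$ the two sums coincide, forcing $D_{\bm x}F=0$. Introducing a stabilization radius $R(\bm x)$ equal to the maximal distance from $\bm x$ to any circumcenter of an affected cell, I would bound $|D_{\bm x}F|$ pointwise by the total volume of affected cells and localize its support to the event $\{\mathrm{dist}(\bm x,\partial A)\le R(\bm x)\}$. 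Exponential stabilization of the Poisson-Delaunay mosaic yields $\P(R(\bm x)>r)\le c_1 \exp(-c_2\, t r^d)$, and together with standard moment estimates on Delaunay cell sizes this delivers $\E|D_{\bm x}F|^p \le C\, t^{-p}$ up to at most polylogarithmic losses.

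The second order difference $D^2_{\bm x,\bm y}F$ vanishes unless the insertion regions of $\bm x$ and $\bm y$ interact, which forces $\|\bm x-\bm y\|\le R(\bm x)+R(\bm y)$ up to a constant; this supplies the decorrelation required to estimate the mixed $\R^d\times\R^d$ integrals in the Malliavin--Stein bound. Plugging in the pieces, the leading term in the bound is schematically
\[
\frac{1}{\sigma^{3}}\int_{\R^d} \E|D_{\bm x}F|^3 \,t\, d\bm x \;\lesssim\; t^{3/2+3/(2d)}\cdot t\cdot t^{-3}\cdot t^{-1/d}\,\l_{d-1}(\partial A)\;=\; t^{-1/2+1/(2d)}\,\l_{d-1}(\partial A),
\]
and the $D^2$-terms can be shown to produce the same rate once the exponential decorrelation is used.

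The main obstacle will be making the boundary localization of $D_{\bm x}F$ quantitative: the deterministic vanishing of $D_{\bm x}F$ in the interior relies on all affected circumcenters lying on one side of $\partial A$, whereas flat ``sliver'' cells can place their circumcenter arbitrarily far from their geometric support. I would deal with this by splitting according to whether the affected cells are well-shaped (bounded circumradius-to-inradius ratio), exploiting the convexity of $A$ (so $\partial A$ has positive reach and the relevant boundary strip has the right $(d-1)$-content), and absorbing sliver contributions via known $L^p$-bounds on Poisson-Delaunay circumradii. The remaining work is bookkeeping inside the Malliavin--Stein estimate, using the variance lower bound from Theorem \ref{thm:var} to convert the combined stabilization input into the announced rate $t^{-1/2+1/(2d)}$.
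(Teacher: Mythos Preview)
Your overall strategy---Malliavin--Stein bound of Last--Peccati--Schulte type, localization of $D_{\bm x}F$ to a boundary layer via a stabilization radius, and the variance lower bound from Theorem~\ref{thm:var}---is exactly what the paper does, and your schematic rate computation is correct. The paper applies the specific version of the bound (its Theorem~\ref{thm:clt}) that separates uniform $(4{+}p)$-th moment conditions from the probabilities $\P(D_{\bm x}F\neq 0)$ and $\P(D^2_{\bm x,\bm y}F\neq 0)$, rather than the raw $L^3$ form you sketch; this is a cosmetic difference.

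The one substantive divergence is your ``main obstacle''. The sliver issue you anticipate does not arise, and no shape-splitting or positive-reach argument is needed. The paper disposes of it in one line via Voronoi duality: if a Delaunay cell is destroyed upon inserting $\bm x$, then $\bm x$ lies in its open circumball, so its circumcenter is strictly closer to $\bm x$ than to any point of $\eta_t$---hence that circumcenter sits in the Voronoi cell $V_{\eta_t+\delta_{\bm x}}(\bm x)$. Circumcenters of the new cells are, by duality, vertices of that same Voronoi cell. Thus your $R(\bm x)$ is automatically bounded by the circumradius $r_{\bm x}$ of $V_{\eta_t+\delta_{\bm x}}(\bm x)$, regardless of how flat any individual simplex is; the affected cells themselves lie in $B(\bm x,2r_{\bm x})$ because each Delaunay edge at $\bm x$ is bisected by a facet of that Voronoi cell. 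The exponential tail $\P(r_{\bm x}\ge s)\le c\,e^{-c' t s^d}$ then follows directly from Mecke plus Blaschke--Petkantschin (the paper's Lemma~\ref{le:rx}), giving the clean bound $|D_{\bm x}F|\le \kappa_d 2^d r_{\bm x}^d\,\id(r_{\bm x}\ge \tfrac12 d(\bm x,\partial A))$ with no polylogarithmic losses. Your proposed workaround would also succeed, but it is considerably more work than necessary.
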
 Since this bound converges to zero if the intensity $t$ of the Poisson point process tends to infinity, this yields a central limit theorem. 
The main tool of the proof is an abstract central limit theorem (see Theorem \ref{thm:clt}), developed by Lachi\'eze-Rey, Pecatti and Schulte, which is based on the Malliavin-Stein method and, in addition, makes use of stabilizing properties of the Delaunay tessellation. Moreover, the proof of Theorem \ref{thm:CLT} requires a lower variance bound.

\begin{theorem}
	\label{thm:var}
	Let $A$ be a compact convex set with interior points. There are constants $\overline{c_{d,A}}$ and $\underline{c_{d,A}}$ depending only on the dimension $d$ and the convex set $A$ such that
	\begin{equation*}
		\underline{c_{d,A}}t^{-1-\frac{1}{d}}\le\V  \l_d(A_{\eta_t})  \le \overline{c_{d,A}}t^{-1-\frac{1}{d}}
	\end{equation*}
	for $t\ge\left(\frac{8d}{r_A}\right)^d$, where $r_A$ is the inradius of $A$.
\end{theorem}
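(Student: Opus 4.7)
\medskip

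\noindent\textbf{Proof plan.} Write $F_t:=\l_d(A_{\eta_t})=\sum_{\bm c\in C(D_{\eta_t})\cap A}\l_d(D_{\eta_t}(\bm c))$. The heuristic is that $F_t$ fluctuates only through Delaunay cells whose circumcenter lies within distance of order $t^{-1/d}$ of $\bd A$: deep in $A$ or deep in $A^c$ a local perturbation of $\eta_t$ merely reshuffles cells that are either all counted in $F_t$ or all discarded, and hence leaves $F_t$ unchanged. The boundary strip contains of order $t^{(d-1)/d}$ such active cells of typical volume $t^{-1}$, each producing an increment of order $t^{-1}$ when its circumcenter is flipped across $\bd A$, so $\V F_t\sim t^{(d-1)/d}\cdot t^{-2}=t^{-1-1/d}$. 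The two bounds make this heuristic quantitative by separate arguments.

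For the \textbf{upper bound} I apply the Poincar\'e inequality for Poisson functionals,
$$
\V F_t\le t\int_{\R^d}\E\bigl[(D_xF_t)^2\bigr]\,dx,\qquad D_xF_t:=F_t(\eta_t\cup\{x\})-F_t(\eta_t).
$$
Inserting $x$ destroys exactly those Delaunay cells whose open circumball contains $x$ and replaces them by the new cells incident to $x$; by the flip structure of the Delaunay tessellation the union $U(x)$ of the affected cells coincides before and after. Consequently $D_xF_t=0$ unless the circumcenters of the affected cells lie on both sides of $\bd A$, and
$$
|D_xF_t|\le\l_d(U(x))\le c_d R(x)^d\,\id\{d(x,\bd A)\le R(x)\},
$$
where $R(x)$ is a stabilisation radius for the Delaunay tessellation at $x$. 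Standard Poisson-Delaunay estimates (the same ones used for Theorem \ref{thm:CLT}) give an exponential tail $\P(R(x)>r)\le c_1\exp(-c_2 t r^d)$; a dyadic decomposition in $R(x)$ together with integration across the tube $\{d(x,\bd A)\le c\,t^{-1/d}\log t\}$, whose Lebesgue measure is $\lesssim t^{-1/d}\cH^{d-1}(\bd A)$, then yields $\V F_t\le\overline{c_{d,A}}\,t^{-1-1/d}$.

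The \textbf{lower bound} is the main obstacle. I use a disjoint-perturbation strategy: construct $N\ge c_{d,A}t^{(d-1)/d}$ pairwise disjoint regions $\O_i\subset\R^d$ of diameter of order $\e:=t^{-1/d}$ located near $\bd A$, each carrying, on an event of probability bounded away from zero, a local perturbation that shifts $F_t$ by a deterministic amount of order $t^{-1}$. The hypothesis $t\ge(8d/r_A)^d$ ensures $\e\le r_A/(8d)$, so one may pick boundary points $\bm y_1,\dots,\bm y_N\in\bd A$ at pairwise distances $\ge 4d\,\e$ and place around each $\bm y_i$ two small balls $B_i^{+}\subset A$, $B_i^{-}\subset A^c$ of radius $\e/4$ together with a surrounding frozen annulus $A_i$ of radius of order $\e$. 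On the good event that $\eta_t$ outside $\O_i:=A_i\cup B_i^{+}\cup B_i^{-}$ is in a prescribed favourable configuration and $\eta_t\cap\O_i$ consists of a single point in $B_i^{+}\cup B_i^{-}$, the circumcenter of a newly created Delaunay simplex of volume of order $t^{-1}$ flips between $A$ and $A^c$ depending on whether this point falls into $B_i^{+}$ or $B_i^{-}$. Since the $\O_i$ are pairwise disjoint, conditional on $\eta_t\setminus\bigcup_i\O_i$ the restrictions $\eta_t\cap\O_i$ are independent; the variance decomposition
$$
\V F_t\;\ge\;\E\Bigl[\V\Bigl(F_t\;\Big|\;\eta_t\setminus\bigcup_{i}\O_i\Bigr)\Bigr]
$$
combined with the contribution of each good box yields $\V F_t\ge\underline{c_{d,A}}\,N\,t^{-2}\ge\underline{c_{d,A}}\,t^{-1-1/d}$. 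The delicate step is to certify that the frozen annulus $A_i$ decouples $\eta_t\cap\O_i$ from the Delaunay tessellation outside $\O_i$ with probability uniformly positive in $t$ and $i$; this is a stabilisation-type statement for Poisson-Delaunay cells and is where most of the work of the lower bound concentrates.
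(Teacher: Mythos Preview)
Your upper bound argument is essentially the paper's: Poincar\'e inequality, the bound $|D_xF_t|\le c_d r_x^d\,\id\{d(x,\bd A)\le 2r_x\}$ with $r_x$ the Voronoi circumradius at $x$ (playing the role of your $R(x)$), the moment bound of Lemma~\ref{le:rx}, and the Steiner estimate for $\l_d(\bd A+rB_d)$. No substantive difference here.

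For the lower bound you and the paper diverge. The paper does \emph{not} run a disjoint-box conditional-variance argument. Instead it invokes the Schulte--Trapp inequality (Theorem~\ref{thm:TraSchu}): once one checks
\[
t^2\!\int\!\E(D^2_{x,y}F_t)^2\,dx\,dy\ \le\ \alpha\, t\!\int\!\E(D_xF_t)^2\,dx
\]
for some $\alpha$ independent of $t$, the lower bound $\V F_t\ge 4(\alpha+2)^{-2}\,t\!\int\!\E(D_xF_t)^2\,dx$ follows automatically. So the paper only has to exhibit, for each $x$ in a boundary strip of measure $\sim t^{-1/d}V_{d-1}(A)$, a \emph{single} event of $t$-independent probability on which $(D_xF_t)^2\ge c\,t^{-2}$; Lemma~\ref{le:simplex} supplies the geometric construction (a near-regular simplex with center $\bm c\in A$ such that inserting $x\in A^c$ replaces one cell with center in $A$ by a subcell with center in $A^c$, and no other cell is affected). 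The second-order bound simply recycles the Poincar\'e-type estimates from the upper bound. No summation over boxes, no decoupling, no conditional-variance additivity is ever needed.

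Your route is not wrong in spirit, but as written it has real gaps. First, your annulus $A_i$ is part of $\O_i$ and on your good event $\eta_t\cap\O_i$ consists of a single point, so $A_i$ is \emph{empty}; an empty annulus cannot screen. The screen has to sit in the conditioned configuration outside $\O_i$, and you must then argue it is present with probability bounded away from zero---this is precisely what the paper's event $E$ does, but for one point at a time. Second, even granting conditional independence of the $\eta_t\cap\O_i$, the Delaunay tessellation is not local, so $\V(F_t\mid\cG)$ does not decompose as a sum over $i$ for free: you need either the Hoeffding/ANOVA lower bound $\V(F_t\mid\cG)\ge\sum_i\V(\E[F_t\mid \eta_t\cap\O_i,\cG]\mid\cG)$ together with a proof that, on the screen event for box $i$, the $i$-th projection actually varies by order $t^{-1}$, or a genuine additivity statement holding on an event whose probability does not decay with $N$. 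Third, ``the circumcenter of a newly created Delaunay simplex \dots\ flips'' is imprecise: inserting a point creates several new cells with several circumcenters, and you must control all of them; this is exactly what Lemma~\ref{le:simplex} is engineered to do. The Schulte--Trapp route sidesteps all three issues by reducing the lower bound to a pointwise first-order estimate plus a second-order moment bound you essentially already have from the upper-bound computation.
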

Both, the lower and the upper bound, are of the same order. While the computation for the upper bound is based on an application of the well known Poincar\'e inequality for Poisson point processes, the computation of the lower bound is a bit more involved, since it is quite difficult to control whether the difference operator of the Delaunay approximation is positive or negative. A helpful tool to work around this problem is a recent theorem by Schulte and Trapp \cite{SchuTra}, see Theorem \ref{thm:TraSchu}.

\textbf{\begin{figure}[ht]
		\centering
		\includegraphics[scale=0.25]{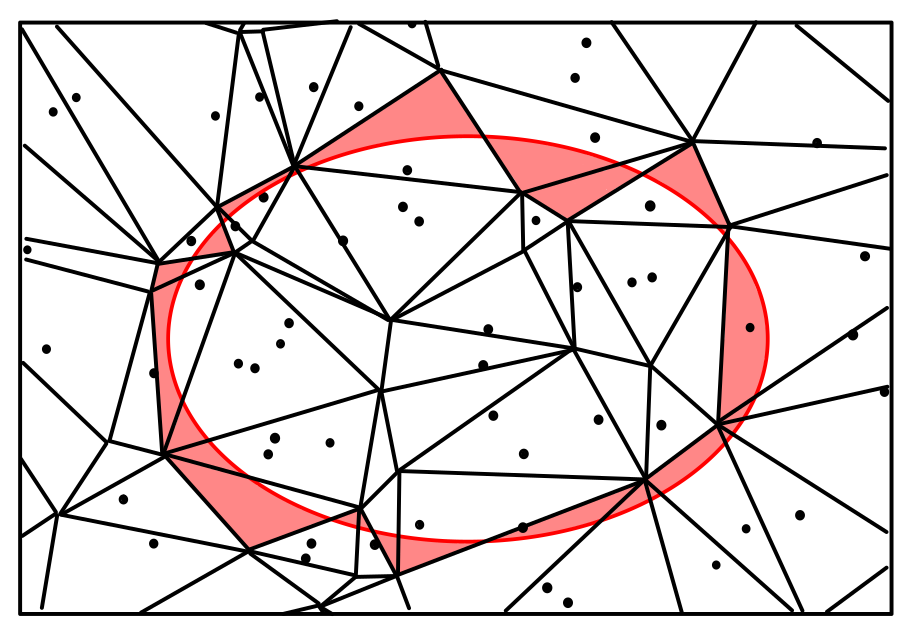}
		\caption{Symmetric difference of the Poisson-Delaunay approximation.}
		\label{Symmetric}
\end{figure}}

The volume of the symmetric difference of the Poisson-Delaunay approximation $A_{\eta_t}$ with $A$ is defined as the volume \lq on the false side of $A$\rq, $$\l_d(A_{\eta_t} \Delta A) = \l_d \big((A_{\eta_t} \backslash A) \cup (A \backslash A_{\eta_t})\big).$$ 

It is obvious that the expected volume of the symmetric difference decreases and tends to zero for growing intensity of the Poisson point process. In the following theorem we show that the rate of decay is $t^{- \frac 1d}$. 
\begin{theorem}\label{thm:sym}
Let $A\subset\R^d$ be a Borel set with $ \l_d (A)<\infty$ and $\Per (A)<\infty$. Then
	\begin{equation*}
\lim_{t \to \infty }t^{\frac 1d} \E \l_d (A_{\eta_t}\Delta A) = c_d\Per(A).
	\end{equation*}
\end{theorem}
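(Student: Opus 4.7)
The plan is to write $\E\l_d(A_{\eta_t}\Delta A)$ as an integral of pointwise mis-assignment probabilities, reduce it via Poisson rescaling to a half-space calculation at each boundary point of $A$, and then integrate over the reduced boundary $\partial^*A$ to pick up the factor $\Per(A)$.

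For $x\in\R^d$, let $\bm c_t(x)$ denote the circumcenter of the (a.s.\ unique) Delaunay cell containing $x$. Since $x\in A_{\eta_t}$ is equivalent to $\bm c_t(x)\in A$, Fubini gives
\begin{equation*}
\E\l_d(A_{\eta_t}\Delta A)=\int_{\R^d}\P[\mathds{1}_A(x)\neq\mathds{1}_A(\bm c_t(x))]\,dx.
\end{equation*}
By the scaling $t^{1/d}\eta_t\stackrel{d}{=}\eta_1$ and the stationarity of $\eta_1$, the displacement $\bm c_t(x)-x$ is distributed as $t^{-1/d}Z$, where $Z:=\bm c_1(\bm 0)$ is the circumcenter of the Delaunay cell covering the origin in the unit-intensity Poisson-Delaunay tessellation. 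The law of $Z$ is isotropic; moreover $|Z|$ is dominated by the circumradius of that cell, which has stretched-exponential tails, so $Z$ admits moments of all orders.

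For a half-space $H^-:=\{z:z\cdot\nu<0\}$ with unit outer normal $\nu$, the one-dimensional integral along a line through $y\in\partial H^-$ in direction $\nu$ evaluates directly: writing $W=Z\cdot\nu$ and using the substitution $u=t^{1/d}r$ gives
\begin{equation*}
\int_\R\P[\mathds{1}_{H^-}(y+r\nu)\ne\mathds{1}_{H^-}(\bm c_t(y+r\nu))]\,dr=t^{-1/d}\E|W|=t^{-1/d}c_d,
\end{equation*}
with $c_d:=\E|Z\cdot e_1|$ independent of $\nu$ by isotropy. For a general set $A$ of finite perimeter, De Giorgi's structure theorem yields a rectifiable reduced boundary $\partial^*A$ with $\cH^{d-1}(\partial^*A)=\Per(A)$ and a measure-theoretic outer unit normal $\nu(y)$ at $\cH^{d-1}$-a.e.\ $y\in\partial^*A$, at which $A$ blows up to $H^-_{\nu(y)}$. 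Fibering the ambient integral in direction $\nu(y)$ through $\partial^*A$ and applying the half-space computation per fibre formally produces $t^{1/d}\E\l_d(A_{\eta_t}\Delta A)\to c_d\Per(A)$.

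The main obstacle is justifying the exchange of limit and integral for a general BV set. The plan is to split $\R^d$ into the tube $T_t:=\{\mathrm{dist}(x,\partial A)\le t^{-1/(2d)}\}$ and its complement. Outside $T_t$ the elementary bound $\P[\mathds{1}_A(x)\ne\mathds{1}_A(\bm c_t(x))]\le\P[|Z|\ge t^{1/d}\mathrm{dist}(x,\partial A)]$, combined with arbitrary polynomial tails for $|Z|$ and the assumption $\l_d(A)<\infty$, shows that the contribution is $o(t^{-1/d})$ after multiplication by $t^{1/d}$. Inside $T_t$, after rescaling $u=t^{1/d}r$ in the transverse direction, the per-fibre integrand converges pointwise at each $y\in\partial^*A$ to the half-space answer by De Giorgi blow-up, and an isotropic envelope depending only on $|Z|$ dominates the integrand uniformly in $t$, so dominated convergence delivers the limit $c_d\Per(A)$.
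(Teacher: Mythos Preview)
Your setup is sound and genuinely different from the paper's: the paper never introduces the random variable $Z=\bm c_1(\bm 0)$; instead it unfolds $\l_d(A_{\eta_t}\setminus A)$ as a sum over Delaunay cells, applies the multivariate Mecke formula and the Blaschke--Petkantschin transformation, and after a substitution sees the covariogram $g_A$ appear, to which Galerne's theorem applies directly. Your rescaling identity $\bm c_t(x)-x\stackrel d= t^{-1/d}Z$ is more conceptual and would in principle give the constant $c_d=\E|Z\cdot\bm e_1|$ in closed probabilistic form.

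However, the execution has a real gap at the ``fibering'' step. The identity
\[
\int_{\R^d} f(x)\,dx \;=\; \int_{\partial^*A}\int_\R f(y+r\nu(y))\,dr\,d\cH^{d-1}(y)
\]
is simply false in general: the normal direction $\nu(y)$ varies with $y$, so these lines overlap and miss regions, and there is no Jacobian that fixes this for an arbitrary BV set. De Giorgi blow-up tells you what $A$ looks like near a single point of $\partial^*A$; it does not hand you a global tubular-neighbourhood decomposition. Relatedly, your tube $T_t$ is defined via the \emph{topological} boundary $\partial A$, which for a Borel set of finite perimeter can have positive Lebesgue measure, so neither $\l_d(T_t)\to 0$ nor the ``outside $T_t$'' estimate is under control.

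The fix is short and stays within your framework: integrate over $x$ \emph{before} passing to the boundary. Since $\id_A(x)\neq\id_A(x+v)$ exactly on $A\Delta(A-v)$, Fubini gives
\[
\int_{\R^d}\P\big[\id_A(x)\neq\id_A(\bm c_t(x))\big]\,dx
= 2\,\E\big[g_A(0)-g_A(t^{-1/d}Z)\big].
\]
Now $t^{1/d}\big(g_A(0)-g_A(t^{-1/d}Z)\big)\le \tfrac12\Per(A)\,|Z|$ by the Lipschitz bound in Galerne's theorem, so dominated convergence (using $\E|Z|<\infty$) and the directional-derivative part of Galerne yield
\[
\lim_{t\to\infty} t^{1/d}\E\l_d(A_{\eta_t}\Delta A)
= -2\,\E\Big[|Z|\,\tfrac{\partial g_A}{\partial(Z/|Z|)}(0)\Big]
= \frac{2\kappa_{d-1}}{d\kappa_d}\,\E|Z|\,\Per(A),
\]
the last equality by isotropy of $Z$ and \eqref{eq:2}. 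This replaces your De Giorgi/tube argument entirely and works for arbitrary Borel $A$ with $\l_d(A),\Per(A)<\infty$, matching the level of generality claimed.
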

Here $\Per(A)$ denotes the perimeter of $A$, which we will introduce in the next section. We just point out that for $A$ convex the perimeter corresponds to the $(d-1)$ dimensional Hausdorff measure of the boundary of $A$. There are upper and lower bounds for the constant $c_d$, see Section \eqref{Sym}. 

\bigskip
It turns out that Theorems \ref{thm:exp}--\ref{thm:sym} parallel results for the Poisson-Voronoi approximation, \cite{Schulte, HevRei, ThYu}. It has been shown by Th\"ale and Yukich \cite{ThYu}, that most results first shown for convex sets can be extended to so-called admissible sets. This class contains e.g. convex sets, smooth sets and sets of positive reach. This is also true for the investigations in this paper. We decided to restrict our proofs to compact convex sets in Theorems \ref{thm:CLT} and \ref{thm:var}, to keep the presentation concise.

\section{Preliminaries}
\subsection{Geometric basics}
The $d$-dimensional unit ball in $\R^d$ is denoted by $B_d$ and its boundary $ \bd B_d$ is the unit sphere $S^{d-1}$. Moreover, $B(\bm z ,r)$ is the $d$-dimensional ball with center in $\bm z \in\R^d$ and radius $r>0$.
The volume of the unit ball is given by 
$$ \k_d 
=
\frac{\pi^{\frac d2}}{\Gamma(\frac d2+1)},
$$
where $\Gamma (s)= \int\limits_0^\infty x^{s-1} e^{-x} dx$ is the Gamma function defined for $s >0$. The surface area of the unit sphere is $\omega_d=d\kappa_d$.
Integration of a function $f\colon \R^d \to \R$ with respect to Lebesgue measure is denoted by $\int f(\bm x) d\bm x$, resp. $\int f(x) dx$ for $d=1$.

For subsets $A,B\subset\R^d$ the \emph{Minkowski sum} is defined by
$A+B = \{ \bm a + \bm b  \colon \bm a  \in A, \bm b  \in B\}$. Furthermore, the dilate of a set $A$ by some $\e >0$ is the set 
$ \e A = \{ \e \bm a  \colon \bm a  \in A\}$. One of the famous classical results in the theory of convex bodies is the Steiner formula which says that for a convex body $K$ the volume of $K + \e \cB_d$ is a polynomial in $\e$. Let $K$ be a convex body and assume $\e\geq 0$, then 
\begin{equation}\label{eq:stein}
	\l_d(K+ \e \cB_d) =  \sum _{i=0}^ d \kappa_i V_{d-i}(K) \e^ i 
\end{equation}
with coefficients $V_i(K)$ only depending on $K$. We call these coefficients $V_i(K)$ the \emph{intrinsic volumes} of $K$. We see that the case $\e=0$ gives $V_d(K)= \l_d(K)$ and the case when $\e$ goes to infinity yields $V_0(K)=1$. Moreover, it holds $2V_{d-1}(K)=\cH^{d-1}(K)$, where $\cH^{d-1}(\cdot)$ is the $d-1$-dimensional Hausdorff measure.

\bigskip
Let
$\mathcal{C}_c^1(\R^d)$ be the class of all continuously
differentiable vector--valued functions from $\R^d$ to
$\R^d$ with compact support. The \textsl{perimeter} of a Borel set $A$ is defined as the total variation of the indicator function $\id(\cdot\in A)$ on $\R^d$,  
$$
\Per(A)=\sup\Big\{\int\limits_{A}\Div\varphi(\bm x)\, d\bm x\,:\,\varphi\in\mathcal{C}_c^1(\R^d),\|\varphi\|_\infty\leqslant1\Big\},
$$
where $$\Div\varphi(\bm x)=\sum_{i=1}^d\frac{\partial\varphi_i}{\partial x_i}$$ 
is the divergence operator and
$$\|\varphi\|_\infty=\max_{i=1,\dots,d}\sup_{\bm x\in\R^d}|\varphi_i(\bm x)|$$ 
denotes the supremum norm of $\varphi=(\varphi_1,\dots,\varphi_d)$. When $A$ is a compact
set with Lipschitz boundary, then $\Per(A)$ equals the $(d-1)$-dimensional Hausdorff measure $\cH^{d-1}(\partial A)$ of the boundary of $A$. This includes, as already mentioned, the case when $A$ is convex. 

For a Borel set $A$ with finite volume we define the \emph{covariogram} $g_A$ as 
$$
g_A(\bm x)=\l_d ((A+\bm x)\cap A) 
$$
for $x\in\R^d$.  
A result obtained by Galerne, see \cite[Theorem 14]{Gal}
gives us a useful connection between the perimeter and the covariogram.

\begin{theorem}
	\label{thm:per}
	Let $A$ be a Borel set with finite volume. Then the following assertions are equivalent:
	\begin{itemize}
		\item[(a)] $\Per(A)<\infty$;
		\item[(b)] there exists a finite limit
		\begin{equation*}\label{eq:1}
			\lim_{r\to+0}\frac{g_A(r \bm u)-g_A(0)}{r}  =  \frac{\partial g_A}{\partial  \bm u}(0)
		\end{equation*}
		for all $ \bm u\in S^{d-1}$;
		\item[(c)] $g_A$ is Lipschitz.
	\end{itemize}	
In addition,  the Lipschitz constant of $g_A$ satisfies $\mbox{\rm Lip}(g_A)\leqslant\frac12\Per(A)$ and it holds
	\begin{equation}\label{eq:2}
		\int\limits_{S^{d-1}}\frac{\partial g_A}{\partial  \bm u}(0) \, \mathcal{H}_{d-1} (d \bm u) = - \kappa_{d-1} \Per(A).
	\end{equation}
\end{theorem}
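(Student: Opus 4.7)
The plan is to reduce everything to the symmetric-difference function $h(\bm v):=\l_d(A\Delta(A+\bm v))$. Expanding
\begin{equation*}
g_A(\bm z)=\l_d(A\cap(A+\bm z))=\tfrac12\bigl(\l_d(A)+\l_d(A+\bm z)-h(\bm z)\bigr)
\end{equation*}
and using translation invariance of Lebesgue measure yields the telescoping identity $g_A(\bm x)-g_A(\bm y)=\tfrac12(h(\bm y)-h(\bm x))$, which turns the entire theorem into a statement about $h$. Two structural properties of $h$ will drive the argument: subadditivity $h(\bm v_1+\bm v_2)\le h(\bm v_1)+h(\bm v_2)$, coming from $A\Delta(A+\bm v_1+\bm v_2)\subseteq[A\Delta(A+\bm v_1)]\cup[\{A\Delta(A+\bm v_2)\}+\bm v_1]$ together with translation invariance; and the Fekete-type consequence that $\lim_{r\to 0^+}h(r\bm u)/r=\sup_{r>0}h(r\bm u)/r\in[0,+\infty]$ exists for every direction $\bm u\in S^{d-1}$.

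For (a)$\Rightarrow$(c) I would invoke the classical $BV$ estimate $h(\bm v)\le\Per(A)\,|\bm v|$, obtained by smooth approximation of $\id_A$ and the fundamental theorem of calculus on the segment from $\bm z$ to $\bm z-\bm v$. Inserted into the telescoping identity this gives $|g_A(\bm x)-g_A(\bm y)|\le\tfrac12\Per(A)\,|\bm x-\bm y|$, establishing (c) with the claimed Lipschitz constant. The implication (c)$\Rightarrow$(b) is then immediate: Lipschitzianity bounds $h(r\bm u)/r$ uniformly in $r$, so the Fekete limit is finite and equals $-2\,\bd g_A/\bd\bm u\,(0)$. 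For (b)$\Rightarrow$(a) I would specialise (b) to the coordinate directions $\bm e_1,\dots,\bm e_d$: writing $h(r\bm e_i)=\int_{\bm e_i^\perp}\l_1(A_{\bm y',i}\Delta(A_{\bm y',i}+r))\,d\bm y'$ via Fubini on slices, Fatou's lemma forces the $i$-th directional total variation $|D_{\bm e_i}\id_A|(\R^d)$ to be bounded by $\lim_{r\to 0^+}h(r\bm e_i)/r<\infty$. Hence $\id_A$ is of bounded variation and $\Per(A)\le\sum_{i=1}^d|D_{\bm e_i}\id_A|(\R^d)<\infty$.

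For \eqref{eq:2} I would refine the slicing argument under (a): on each slice the Fekete limit equals the one-dimensional perimeter $\Per_1(A_{\bm y',\bm u})$, and the Fekete supremum identity gives the uniform bound $\l_1(\cdot)/r\le\Per_1(A_{\bm y',\bm u})$; since $\int_{\bm u^\perp}\Per_1(A_{\bm y',\bm u})\,d\bm y'=|D_{\bm u}\id_A|(\R^d)<\infty$, dominated convergence yields
\begin{equation*}
\frac{\bd g_A}{\bd\bm u}(0)=-\tfrac12\,|D_{\bm u}\id_A|(\R^d)=-\tfrac12\int_{\bd^*\!A}|\bm u\cdot\nu_A(\bm x)|\,\cH^{d-1}(d\bm x),
\end{equation*}
the last equality being De Giorgi's polar decomposition $D\id_A=-\nu_A\cH^{d-1}\lfloor\bd^*\!A$. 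Integrating over $\bm u\in S^{d-1}$ and swapping the order of integration, it remains to evaluate the isotropic integral $\int_{S^{d-1}}|\bm u\cdot\nu|\,\cH^{d-1}(d\bm u)=2\kappa_{d-1}$, which is independent of the unit vector $\nu$ by rotational invariance (a spherical-coordinates computation gives $2\omega_{d-1}\int_0^{\pi/2}\cos\theta\sin^{d-2}\theta\,d\theta=2\omega_{d-1}/(d-1)=2\kappa_{d-1}$). This collapses the right-hand side to $-\kappa_{d-1}\Per(A)$. The main obstacle is the identification $\bd g_A/\bd\bm u\,(0)=-\tfrac12|D_{\bm u}\id_A|(\R^d)$ for general directions: justifying the exchange of the $r\downarrow 0$ limit with the integral over $\bm u^\perp$ is precisely where the slicewise Fekete monotonicity bound and the $BV$ slicing theorem are essential.
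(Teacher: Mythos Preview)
The paper does not prove this theorem at all: it is quoted verbatim as a known result of Galerne \cite[Theorem~14]{Gal} and used as a black box in Section~\ref{Sym}. So there is no ``paper's own proof'' to compare against.

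That said, your sketch is sound and is essentially the route Galerne takes. The reduction $g_A=\l_d(A)-\tfrac12 h$ with $h(\bm v)=\l_d(A\Delta(A+\bm v))$, the subadditivity of $h$ and the resulting Fekete identity $\lim_{r\downarrow0}h(r\bm u)/r=\sup_{r>0}h(r\bm u)/r$, the $BV$ translation estimate $h(\bm v)\le\Per(A)\,|\bm v|$ for (a)$\Rightarrow$(c), and the slicing/Fatou argument for (b)$\Rightarrow$(a) are exactly the standard ingredients. One small point worth tightening: to get $\mbox{\rm Lip}(g_A)\le\tfrac12\Per(A)$ you need $|h(\bm x)-h(\bm y)|\le h(\bm x-\bm y)$, which follows from subadditivity together with the evenness $h(-\bm v)=h(\bm v)$; your telescoping line as written only gives $g_A(\bm x)-g_A(\bm y)=\tfrac12(h(\bm y)-h(\bm x))$, not yet a bound by $|\bm x-\bm y|$. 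For \eqref{eq:2}, your identification $\bd g_A/\bd\bm u\,(0)=-\tfrac12|D_{\bm u}\id_A|(\R^d)$ via the $BV$ slicing theorem and dominated convergence (with the slicewise Fekete supremum providing the integrable majorant) is the correct mechanism, and the spherical integral $\int_{S^{d-1}}|\bm u\cdot\nu|\,\cH^{d-1}(d\bm u)=2\kappa_{d-1}$ then finishes the job.
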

  
We will need an explicit simplex construction for the proof of the lower variance bound which we include here.
\begin{lemma}
	\label{le:simplex}
	Let $T$ be a regular simplex with vertices $\bm a _1, \dots, \bm a _{d+1}$, inradius $s$ and center $\bm c $.
	Then there is a $\d=\d(d)\in (0,\frac 15)$ such that the following holds: for each simplex $T'$ with vertices $\bm v _i \in B(\bm a _i,\d s)$, the center $\bm c '$ of the circumball of $T'$ is in $B(\bm c ,\frac 1{5}s)$, and each sphere $S$ of radius $R\geq 3 d^2 s $ containing exact $d$ of the vertices of $T'$ is disjoint from the ball $B(\bm c , \frac 45 s)$.
\end{lemma}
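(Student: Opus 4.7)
The plan is to handle the two claims separately. For the circumcentre, observe that the map $(\bm v_1, \dots, \bm v_{d+1}) \mapsto \bm c'$ is a rational function of the coordinates, smooth at the regular configuration, and by scale invariance its local Lipschitz constant $L(d)$ depends only on $d$. Hence $|\bm c' - \bm c| \leq L(d)\, \delta s$, and requiring $\delta \leq 1/(5 L(d))$ yields $|\bm c' - \bm c| \leq s/5$.

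For the sphere claim, place $\bm c$ at the origin so that $|\bm a_i| = ds$. Fix indices $i_1 < \dots < i_d$ and let $\bm p = \frac{1}{d}(\bm a_{i_1} + \dots + \bm a_{i_d})$; by regularity $\bm p = s\bm n$ for a unit vector $\bm n$, and the identities $\bm a_i \cdot \bm a_j = -ds^2$ (for $i \neq j$) give $\bm n \cdot \bm a_{i_k} = s$ for each chosen $k$. Let $\bm C$ be the centre of $S$. Averaging $|\bm C - \bm v_{i_k}|^2 = R^2$ over $k$ produces the identity
$$
|\bm C|^2 - R^2 \;=\; 2s\, (\bm C \cdot \bm n) - d^2 s^2 + E,
$$
where the error satisfies $|E| \leq C_1(d)\, R \delta s$ thanks to the bounds $|\bm v_{i_k} - \bm a_{i_k}| \leq \delta s$ on the perturbations. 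In the unperturbed setting $\bm C_0$ lies along $\bm n$, and writing $T := \sqrt{R^2 - (d^2-1)s^2}$ the resulting quadratic gives
$$
\big| |\bm C_0| - R \big| \;=\; s \mp \frac{(d^2-1)s^2}{R + T} \;\geq\; \frac{5s}{6} \qquad (R \geq 3d^2 s),
$$
the two signs corresponding to the centre of $S$ lying on the far or near side of $\bm c$ along $\bm n$.

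In the perturbed case, dividing the identity above by $|\bm C| + R \sim 2R$ turns the $O(R\delta s)$ error $E$ into an $O(\delta s)$ shift of $|\bm C| - R$. Furthermore, subtracting pairs $|\bm C - \bm v_{i_k}|^2 = |\bm C - \bm v_{i_l}|^2$ bounds $|\bm C \cdot (\bm a_{i_k} - \bm a_{i_l})|$ by $O(R\delta s)$; since the $(d-1)$ vectors $\bm a_{i_k} - \bm a_{i_l}$ span the hyperplane orthogonal to $\bm n$ with smallest singular value of order $s\sqrt d$, the transverse component of $\bm C$ has norm at most $C_3(d)\, R\delta/\sqrt d$, contributing another $O(\delta s)$ shift after substitution into $|\bm C|^2 = (\bm C \cdot \bm n)^2 + |\bm C_\perp|^2$. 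Choosing $\delta = \delta(d)$ small enough preserves $\big| |\bm C| - R \big| \geq 4s/5$, which is the disjointness of $S$ from $B(\bm c, 4s/5)$. The main obstacle is uniformity in $R$: the crude bounds on $E$ and on the transverse component of $\bm C$ both grow linearly with $R$, but this growth is compensated by the factor $|\bm C| + R \sim 2R$ in the denominator of the quadratic formula for $|\bm C| - R$, keeping the perturbation of the distance from $\bm c$ to $S$ of order $\delta s$ uniformly in $R \geq 3d^2 s$.
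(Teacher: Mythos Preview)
Your argument is correct. In outline it matches the paper's: both first compute, for the regular simplex with $s=1$, that any sphere of radius $R\ge 3d^2$ through $d$ of the vertices stays at distance $\ge 5/6$ from $\bm c$ (the paper phrases this via the cap height $h=R-\sqrt{R^2-(d^2-1)}\le 1/6$, you via $\bigl||\bm C_0|-R\bigr|\ge 5/6$; these are the same computation). The difference lies in the perturbation step. The paper simply invokes continuity of the circumcentre and of the cap height in the vertex coordinates, and asserts that for $\delta$ small the $5/6$ becomes $4/5$. Your route is more explicit: you average and subtract the equations $|\bm C-\bm v_{i_k}|^2=R^2$ to isolate an identity for $|\bm C|^2-R^2$ and a bound on $\bm C_\perp$, and then track how the $O(R\delta s)$ errors become $O(\delta s)$ after dividing by $|\bm C|+R\sim 2R$. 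This buys you something the paper leaves implicit, namely the uniformity in $R$: a naive continuity argument does not immediately explain why the admissible $\delta$ is independent of $R$, whereas your quadratic-formula bookkeeping makes that compensation transparent. (A minor point: the smallest singular value of the matrix with rows $\bm a_{i_k}-\bm a_{i_l}$ is actually of order $sd$, not $s\sqrt d$; this only improves your bound on $|\bm C_\perp|$ and does not affect the argument.)
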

\begin{proof}
Clearly it suffices to consider the case $s=1$.
We start by investigating the situation for the regular simplex $T$ in $\R^d$.
The inradius of $T$ is $1$, the circumradius $d$, hence by Pythagoras' theorem the $(d-1)$-dimensional circumball of $\bm a _1, \dots, \bm a _{d}$ has radius $\sqrt{d^2- 1} $. Assume that $S$ contains $\bm a _1, \dots, \bm a _{d}$ and let $E$ be the affine hyperplane spanned by $\bm a _1, \dots, \bm a _{d}$. The height of the cap cut off from $S$ by $E$ is given by
$$
h
= R- \sqrt{R^2 -(d^2 - 1)}
$$
which is monotone decreasing in $R$.
Since $R\geq 3 d^2 s $, it holds
$$
h
\leq 3 d^2 - \sqrt{9d^4 -d^2 +\frac 1{36} }
= \frac 16.
$$
Because the inradius of $T$ is 1, this shows that this cap has distance at least $\frac 56$ to the center $\bm c $. We thus note the following observation:
\\[1ex]
{\it{Let $T$ be a regular simplex with vertices $\bm a _1, \dots, \bm a _{d+1}$, inradius $1$ and center $\bm c $. Then each sphere $S$ of radius $\geq 3 d^2$ containing exact $d$ of the vertices of $T$ is disjoint from the ball $B(\bm c , \frac 56)$.
}}
\\[1ex]
In the second step we note, that the position of the center of a simplex, and the height of a cap of a sphere cut off by a hyperplane are continuous functions in the coordinates of the vertices of the simplex. Hence if the vertices of $T'$ are sufficiently close to the vertices of $T$, e.g. $\|\bm v _i -\bm a _i\| \leq \d$ with $\d$ sufficiently small, then the center of the circumball $\bm c '$ is close to $\bm c $, $\| \bm c -\bm c '\| \leq \frac 15$, and if a sphere $S$ containing $d$ vertices of $T$ avoids $B(\bm c , \frac 56)$ then a sphere $S'$ containing $d$ vertices of $T'$ avoids $B(\bm c , \frac 45)$ for $\d$ sufficiently small.
\end{proof}

\subsection{Integral transformation}

First, we consider two integral transformation formulas of Blaschke-Petkantschin type. These formulas are based on the fact, that $d+1$ points in general position always lie on a uniquely defined sphere. So we can represent each point $ \bm x_i$ with the center $\bm c \in\R^d$ and the radius $r\ge0$ of the sphere combined with a directional vector $\bm u_i\in S^{d-1}$, i.e. $\bm x_i=\bm c +r \bm u_i$.  Let ${f:(\R^d)^{d+1}\to\R}$ be a nonnegative measurable function. Due to \cite[Theorem 7.2.2]{StoGeo} it holds
\begin{align} \label{eq:Bla}
	\int\limits_{(\R^d)^{d+1}}f(\bm x_0,\ldots,\bm x_d)d\bm x_0\ldots \bm x_d
	& =
	d!\int\limits_{\R^d}\int\limits_{0}^{\infty}\int\limits_{(S^{d-1})^{d+1}} f(\bm c +r \bm u_{0},\ldots,\bm c + r \bm u_{d})
	\notag \\ & \hskip2cm \times
	r^{d^2-1}\l_{d}([\bm u_{0},\ldots, \bm u_{d}]) ~d\bm u_{0}\ldots d \bm u_{d} dr d\bm c .
\end{align}

If we integrate only over $d$ points, there is also an integral transformation of Blaschke-Petkantschin type, since $d$ points together with the origin define a unique sphere. This can be found in \cite[Theorem 3]{Nick}. Let ${f:(\R^d)^{d}\to\R}$ be a nonnegative measurable function. Then
\begin{align}
	\label{eq:Bla1}
	\int\limits_{(\R^d)^{d}}f(\bm x_1,\ldots,\bm x_d)d\bm x_1\ldots d \bm x_d
	& =
	d!\int\limits_{0}^{\infty}\int\limits_{S^{d-1}}\int\limits_{(S^{d-1})^{d}} f(r \bm u+r \bm u_{1},\ldots,r\bm u+r \bm u_{d})
	\notag\\& \hskip1cm \times
	r^{d^2-1}\l_{d}([\bm u_{1},\ldots,\bm u_{d},-\bm u])~d\bm u_{1}\ldots d\bm u_{d} d\bm u dr.
\end{align}
Here $r \bm u$ is the center of the sphere and $\bm x_i= r \bm u+r \bm u_{i}$.

A Blaschke-Petkantschin integral transformation often leads to an integral of the form below. According to \cite[Theorem 8.2.3]{StoGeo}, this corresponds to a dimension-dependent  constant. For $d,k\in\N$ with ${d\ge 1}$ and ${k\ge0}$ it is
	\begin{align}
		\label{eq:S}
		S(d,d,k)
		&:=
		\int\limits_{(S^{d-1})^{d+1}}\l_{d}([\bm u_0,\ldots,\bm u_d])^k \, d\bm u_0\ldots d\bm u_{d}
		=
		\frac{\omega_{d+k}^{d+1} \kappa_{d(d+k-2)+d-2}}{(d!)^k \kappa_{(d+1)(d+k-2)} c_{(d+k)d}},
	\end{align}
where $c_{(d+k)d}$ is defined as $\frac{\omega_{k+1}\cdot\ldots\cdot\omega_{d+k}}{\omega_1\cdot\ldots\cdot\omega_{d}}$. Some basic calculations show that $$S(d,d,2)=\frac{d+1}{(d-1)!}\k_d^{d+1}.$$

\subsection{Poisson functionals}
Let $\eta_t$ be a stationary Poisson point process in $\R^d$ with intensity measure $\mu_t = t\l_d $ with $t > 0$, where $\l_d $ is the $d$-dimensional Lebesgue measure. Thus $\eta_t: \Omega \to \bm N(\R^d)$ is a random variable taking values in the space of $\s$-finite counting measures $\bm N(\R^d)$, $\eta_t (\omega, \cdot)= \sum \d_{\bm x_i(\omega)}(\cdot)$ where $\d_{\bm x}(\cdot)$ is the Dirac measure evaluated at the point $\bm x_i$. 
In the following we suppress the dependence on $\omega$. The intensity measure is the expectation
$\E \eta_t (A) = \E \sum \d_{\bm x_i} (A)=t \l_d(A)$.
We identify $\eta_t$ with its support $\{\bm x_i\}$, thus $\eta_t(A)$ is a Poisson random variable, the number of points in $A$, and $\eta_t \cap A$ denotes the set of points. 
For more information see for example \cite{StoGeo}\cite{LePoPu}.

A \emph{Poisson functional} $F$ is a random variable depending on a Poisson point process $\eta_t$. Let  $f: \bm N(\R^d) \to\R$ be a measurable function. If $F=f(\eta_t)$ almost surely, we call $f$ a representative of $F$. For simplicity we denote both, $F$ and $f$, with the symbol $F$. If $F$ is square-integrable, i.e. $\E \vert F\vert^2< \infty$, we write $F\in L_{\eta_t}^2$.

One main tool in this paper is the multivariate  Mecke equation, see for example \cite[Theorem 4.4]{LePoPu}. This equation provides an elegant way to compute the expectation. Let $f:\R^m\times \bm N(\R^d)\to \R$ be a measurable function, $ m \geq 1$. Then
\begin{equation}
	\label{eq:Mecke}
	\E\sum_{(\bm x_1,\ldots,\bm x_m)\in\eta^m_{\neq}}  f(\bm x_1,\ldots,\bm x_m; \eta_t)
	= 
	t^m\int\limits_{\R^m} \E f \left(\bm x_1,\ldots,\bm x_m;  \eta_t + \sum_1^m \d_{\bm x_i} \right)\, 
	d\bm x_1 \ldots  d\bm x_m .
\end{equation}

\subsubsection{Poisson Voronoi- and Delaunay-tessellations}

Let $\eta_{t, \neq}^m$ denote the set of all pairwise distinct $m$-tuples of points of a stationary Poisson point process $\eta_t$. Write $[\bm x_0, \dots, \bm x_d]$ for the convex hull of $(\bm x_0, \dots, \bm x_d) \in \eta_{t,\neq}^{d+1}$, and let $B(\bm x_0,...,\bm x_d)$ be the ball containing $\bm x_0,...,\bm x_d$ in its boundary, i.e. the circumball. The circumball exists and is unique with probability one.
A $(d+1)$-tuple $(\bm x_0, \dots, \bm x_d) \in \eta_{t,\neq}^{d+1}$ forms a Delaunay cell $[\bm x_0, \dots, \bm x_d]$, if the circumball $B (\bm x_0,...,\bm x_d)$ does not contain any further point from $\eta_t$. It is well known, see e.g. \cite{StoGeo}, that the collection $D_{\eta_t}$ of all Delaunay cells is a random tessellation of $\R^d$ called the Poisson-Delaunay tessellation. Define the center $\bm c=\bm c( \bm x_0,...,\bm x_d)$ of a Delaunay cell as the center of the circumball
$$B (\bm x_0,...,\bm x_d)=B(\bm c, \|\bm c -\bm x_0\|) , $$
and write $D_{\eta_t}(\bm c)$ for the corresponding Delaunay cell. Let $C(D_{\eta_t})$ be the set of all centers of Delaunay cells of $\eta_t$. As already stated in \eqref{def:PoisDel}, the Poisson-Delaunay approximation $A_{\eta_{t}}$ of $A$ is
$$
A_{\eta_{t}} :=\bigcup_{\bm c  \in C(D_{\eta_t}) \cap A} D_{\eta_t}(\bm c) .
$$

Let $V_{\eta_t}$ be the Voronoi tessellation of $\eta_t$, and define for general $\bm x \in \R^d$ by $V_{\eta_t + \d_\bm x} (\bm x)$ the Voronoi cell as the set of all points, which are closer to $\bm x$ than any other point of the  Poisson point process $\eta_t$, i.e. the cell with center $\bm x$ of the Voronoi tessellation of $\eta_t +\d_{\bm x}$. In the following, the radius $r_{\bm x}$ of the circumball of the Voronoi cell $V_{\eta_t + \d_\bm x} (\bm x)$ with center $\bm x$ will turn out to be of importance.

\begin{lemma}\label{le:rx}
Let $r_{\bm x}$ be the circumradius of $V_{\eta_t + \d_\bm x} (\bm x)$ with $t \geq 1$, and let $s \in \R$. Then
$$
\E r_{\bm x}^k \id(\bm r_{\bm x} \geq s)
\leq
S(d,d,1)\,  t^d
\int\limits_{0}^{\infty}
e^{-t\kappa_{d} r^{d}} r^{d^2+k-1} \id(r \geq s) \, dr
.
$$
\end{lemma}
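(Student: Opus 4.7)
The plan is to exploit Voronoi--Delaunay duality to re-express $r_{\bm x}$ via circumcenters of Delaunay cells incident to $\bm x$, then apply the multivariate Mecke equation \eqref{eq:Mecke}, and reduce the resulting integral by the Blaschke--Petkantschin formula \eqref{eq:Bla1}. Since $r_{\bm x}$ equals the largest distance from $\bm x$ to a vertex of its own Voronoi cell, the only inequality in the whole argument is the pointwise bound
$$
r_{\bm x}^k\, \id(r_{\bm x}\ge s) \;\le\; \sum_{\bm c } \|\bm c -\bm x\|^k\, \id(\|\bm c -\bm x\|\ge s),
$$
where $\bm c $ ranges over the vertices of $V_{\eta_t+\d_{\bm x}}(\bm x)$.

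By duality, each such vertex $\bm c $ is the circumcenter $\bm c =\bm c (\bm x,\bm x_1,\ldots,\bm x_d)$ of some unordered $d$-subset $\{\bm x_1,\ldots,\bm x_d\}\subset\eta_t$, and the corresponding open circumball is empty of further points of $\eta_t$ (emptiness for $\eta_t+\d_{\bm x}$ reduces to emptiness for $\eta_t$, since $\bm x,\bm x_1,\ldots,\bm x_d$ sit on the sphere). Rewriting the sum over ordered tuples in $\eta_{t,\neq}^d$ produces a factor $1/d!$, and the multivariate Mecke equation \eqref{eq:Mecke} then replaces the random sum by an integral over $(\R^d)^d$ in which the Delaunay indicator integrates out to the Poisson void probability $e^{-t\k_d \|\bm c -\bm x\|^d}$. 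Next, translating to $\bm x=\bm 0$, the circumsphere passes through the origin, which is precisely the setup of \eqref{eq:Bla1}: with $\bm x_i = r\bm u+r\bm u_i$ the integrand depends only on $r=\|\bm c\|$, so the $d+1$ spherical integrations factor out and, after relabelling $-\bm u$ as a further $\bm u_0$, collapse by \eqref{eq:S} to $S(d,d,1)$. The two factors of $d!$ cancel, and the Jacobian $r^{d^2-1}$ combines with $r^k$ to give the radial density $r^{d^2+k-1}$ required.

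The main obstacle is the combinatorial bookkeeping in the middle step: one must confirm that the vertices of $V_{\eta_t+\d_{\bm x}}(\bm x)$ are in bijection with the $d$-subsets of $\eta_t$ which, together with $\bm x$, satisfy the empty-circumball condition, and that the passage from unordered subsets to ordered tuples yields exactly the $1/d!$ needed to cancel the $d!$ produced by \eqref{eq:Bla1}. Once this identification is in place, every subsequent step is a deterministic manipulation, and the bound follows with no further inequalities beyond the initial $\max\le\sum$.
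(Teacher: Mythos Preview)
Your proposal is correct and follows essentially the same route as the paper's own proof: the initial $\max\le\sum$ bound over the vertices of $V_{\eta_t+\d_{\bm x}}(\bm x)$, the identification of each vertex with an empty-circumball $d$-tuple in $\eta_{t,\neq}^d$ (with the $1/d!$ ordering factor), the Mecke equation \eqref{eq:Mecke} followed by the void probability $e^{-t\k_d\|\bm c-\bm x\|^d}$, translation to the origin, and finally the Blaschke--Petkantschin formula \eqref{eq:Bla1} together with the relabelling $-\bm u\mapsto\bm u_0$ that produces $S(d,d,1)$. The paper records the vertex-to-tuple identification as an inequality rather than a bijection, which sidesteps the genericity bookkeeping you flag, but otherwise the arguments coincide step for step.
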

\begin{proof} We start with the estimate
\begin{align*}
\E r_{\bm x}^k \id(\bm r_{\bm x} \geq s)
\leq
\E \sum_{\bm c \in \cF_0(V_{\eta_t + \d_\bm x} (\bm x)) } \| \bm x - \bm c  \|^k \id(\| \bm x - \bm c  \| \geq s ),
\end{align*}
where $\cF_0(V_{\eta_t + \d_\bm x} (\bm x)) $ denotes the vertices of the Voronoi cell $V_{\eta_t + \d_\bm x} (\bm x)$. Each vertex $\bm c$ is the center $\bm c(\bm x, \bm x_1, \dots, \bm x_d)$ of a circumball of a $d$-tuple in $\eta_{t, \neq}^d$ and $\bm x$, which is empty with respect to $\eta_t$. Using this and applying the multivariate Mecke formula \eqref{eq:Mecke} results in the bounds
\begin{align*}
\E r_{\bm x}^k \id(\bm r_{\bm x} \geq s)
&\leq
\frac{1}{d!}
\E \sum_{(\bm x_{1},\ldots,\bm x_{d})\in\eta_{t\neq}^{d}}
\id \left(\eta_{t} ( B^{\circ}(\bm x,\bm x_{1},\ldots, \bm x_{d}))= 0 \right)  \| \bm x - \bm c (\bm x,\bm x_{1},\ldots,\bm x_{d}) \|^k
\\ & \hskip5cm \times
  \id(\| \bm x - \bm c (\bm x,\bm x_{1},\ldots,\bm x_{d})  \| \geq s )
\\ &\leq
\frac{1}{d!} t^d \int\limits_{(\R^{d})^{d}}
\P(\eta_{t} ( B^{\circ}(\bm x_{0},\ldots, \bm x_{d}))=0)
\| \bm x - \bm c (\bm x,\bm x_{1},\ldots,\bm x_{d})\|^k
\\ & \hskip5cm \times
\id(\| \bm x - \bm c (\bm x,\bm x_{1},\ldots,\bm x_{d})  \| \geq s )\, d\bm x_1\ldots d \bm x_d
. 
\end{align*}
Because the number of points is Poisson distributed, we have 
$$
\P(\eta_{t} ( B^{\circ}(\bm x_{0},\ldots, \bm x_{d}))=0)=e^{-t\l_d(B(\bm x_{0},\ldots, \bm x_{d}))}
$$
which results in
\begin{align*}
	\E r_{\bm x}^k \id(\bm r_{\bm x} \geq s)
	&\leq
	\frac{1}{d!} t^d \int\limits_{(\R^{d})^{d}}
	e^{-t\k_d	\| \bm x - \bm c (\bm x,\bm x_{1},\ldots,\bm x_{d})\|^d}
	\| \bm x - \bm c (\bm x,\bm x_{1},\ldots,\bm x_{d})\|^k
	\\ & \hskip5cm \times
	\id(\| \bm x - \bm c (\bm x,\bm x_{1},\ldots,\bm x_{d})  \| \geq s )\, d\bm x_1\ldots d \bm x_d
	. 
\end{align*}
Since Lebesgue measure is translation invariant, this is independent of $\bm x$.
\begin{align*}
\E r_{\bm x}^k \id(\bm r_{\bm x} \geq s)
&\leq
\frac{1}{d!}  t^d \int\limits_{(\R^{d})^{d}}
e^{-t\kappa_{d} \| \bm c (\bm 0,\bm x_{1},\ldots,\bm x_{d}) \| ^{d}}
\| \bm c (\bm 0,\bm x_{1},\ldots,\bm x_{d})\|^k
\\ & \hskip5cm \times
\id(\| \bm c (\bm 0,\bm x_{1},\ldots,\bm x_{d})  \| \geq s )\, d\bm x_1\ldots d \bm x_d
\end{align*}
The Blaschke-Petkantschin integral transformation \eqref{eq:Bla1} with $\bm c (\bm 0,\bm x_{1},\ldots,\bm x_{d})  =r\bm u$ yields
\begin{align*}
\E r_{\bm x}^k \id(\bm r_{\bm x} \geq s)
&\leq
 t^d \int\limits_{0}^{\infty}
e^{-t\kappa_{d} r ^{d}} r^{d^2+k-1} \id( r \geq s )\, dr
\\ & \hskip4cm \times
\int\limits_{(S^{d-1})^{d+1}}
\l_{d}([\bm u_{1},\ldots,\bm u_{d},-\bm u])~d\bm u_{1}\ldots d\bm u_{d} d\bm u
\\ &\leq
S(d,d,1)\,  t^d
\int\limits_{0}^{\infty}
e^{-t\kappa_{d} r^{d}} r^{d^2+k-1} \id(r \geq s) \, dr
\end{align*}
because the second integral equals $S(d,d,1)$, see \eqref{eq:S}.
\end{proof}

\subsubsection{Variance bounds}

For $\bm x\in\R^d$, the first order difference of a Poisson functional $F$ is defined by
$$
D_{\bm x} F= F(\eta_t + \d_{\bm x})- F(\eta_t), 
$$
and for $n\in \N$ and $\bm x_1,...,\bm x_n\in\R^d$ we can define the differential operator of $n$-th order iteratively
$$
D_{\bm x_1,...,\bm x_n} ^n F= D_{\bm x_n}(D_{\bm x_1,...,\bm x_{n-1}}^{n-1} F).
$$
Now we can put the variance of a Poisson functional into terms of the difference operators, see for example \cite[Theorem 18.6]{LePoPu} or \cite[Theorem 1.1]{Fock}. Assume that $\eta_t$ has intensity measure $t \l_d$. The so called \emph{Fock space representation} of a Poisson functional $F$, with $F\in L_{\eta_t}^2$, is defined by
$$
\V(F)=\sum_{n=1}^{\infty}\frac{t^n}{n!}\int\limits_{\R^d}(\E D_{\bm x_1,...,\bm x_n} ^n F)^2 \, d\bm x_1,...d\bm x_n.
$$ 

From this representation one can derive upper an lower bounds for the variance. For $F\in L_{\eta_t}^2$, the Poincar\'e inequality for a Poisson point process, see for example \cite[Theorem 18.7]{LePoPu}, says that  
\begin{equation}
	\label{eq:poin}
	\V (F)\le t\int\limits_{\R^d}\E (D_{\bm x} F)^{2} d\bm x.
\end{equation}

For a lower variance bound we just consider the first summand of the Fock space representation. Thus we obtain
$$
\V(F)\ge t\int\limits_{\R^d}(\E D_{\bm x} F)^2 d\bm x.
$$
Unfortunately, this formula is not suitable to calculate a lower variance bound for all Poisson functionals. It becomes difficult, when the difference operator of a functional can be both, positive and negative. In this case, the positive and negative values can cancel out each other and the expectation of the sum can equal zero. As we will see later, this is also the case for the Poisson-Delaunay Approximation. To work around this problem, we will use a theorem by Schulte and Trapp \cite[Theorem 1.1]{SchuTra}, which provides a formula containing only the squared difference operator. They call it a counterpart of the Poincar\'e inequality. 

\begin{theorem}
	\label{thm:TraSchu}
	If a Poisson functional $F\in L_{\eta_t}^2$ satisfies 
	\begin{equation}
		\label{eq:Bed}
		t^2\int \E (D_{\bm x,\bm y }^2 F )^2 d\bm x d\bm y 
		\le
		\alpha t\int \E  (D_{\bm x} F )^2 d\bm x 
		<\infty
	\end{equation}
	for some constant $\alpha \ge 0$, then
	\begin{equation}
		\label{eq:V}
		\V F \ge \frac{4}{ (\alpha+2 )^2} \, t\int \E (D_{\bm x} F )^2 d\bm x .
	\end{equation}
\end{theorem}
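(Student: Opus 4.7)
My approach is to push everything through the Fock space representation already quoted in the preliminaries, which converts the abstract statement into a weighted inequality for a sequence of non-negative numbers. Introduce the symmetric kernels
\[T_n(F)(\bm x_1,\ldots,\bm x_n) := \E D^n_{\bm x_1,\ldots,\bm x_n} F\]
and set
\[a_n := \frac{t^n}{n!}\int_{(\R^d)^n} T_n(F)(\bm x_1,\ldots,\bm x_n)^2\, d\bm x_1\cdots d\bm x_n \geq 0,\]
so that the Fock space formula reads $\V F = \sum_{n \geq 1} a_n$.

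To bring the remaining two quantities into the same form, I would apply the Fock space representation to the Poisson functional $D_{\bm x} F$ itself, using the commutation $T_n(D_\bm x F)(\bm z_1,\ldots,\bm z_n) = T_{n+1}(F)(\bm x, \bm z_1,\ldots,\bm z_n)$. Writing $\E(D_\bm x F)^2 = (\E D_\bm x F)^2 + \V(D_\bm x F)$, integrating in $\bm x$ and reindexing produces $t \int \E(D_\bm x F)^2\, d\bm x = \sum_{n\geq 1} n\, a_n$. An analogous two-fold application of the same idea yields $t^2 \int \E(D^2_{\bm x,\bm y} F)^2\, d\bm x\, d\bm y = \sum_{n\geq 2} n(n-1)\, a_n$.

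With these representations, the hypothesis \eqref{eq:Bed} becomes $\sum n(n-1) a_n \le \alpha \sum n a_n$. Cauchy--Schwarz applied to $u_n := \sqrt{a_n}$ and $v_n := n\sqrt{a_n}$ then gives
\[\Big(\sum_n n\, a_n\Big)^2 \le \Big(\sum_n a_n\Big)\Big(\sum_n n^2 a_n\Big) = \V F \cdot \Big(\sum_n n\, a_n + \sum_n n(n-1)\, a_n\Big) \le (1+\alpha)\, \V F \cdot \sum_n n\, a_n,\]
whence $\V F \ge \frac{1}{1+\alpha}\, t \int \E(D_\bm x F)^2\, d\bm x$. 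Since $\frac{4}{(\alpha+2)^2} \le \frac{1}{1+\alpha}$ for every $\alpha \ge 0$ (equivalent to $\alpha^2 \ge 0$), the desired \eqref{eq:V} follows; in fact a strictly sharper constant drops out of this argument, but the weaker form \eqref{eq:V} suffices for its later use.

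The main obstacle I anticipate is purely technical: one has to justify the pointwise Fock space expansions of $D_\bm x F$ and $D^2_{\bm x,\bm y} F$ together with a Fubini interchange to trade the integration in $d\bm x$ (or $d\bm x\, d\bm y$) for the countable sum. Both become routine once the finiteness hypothesis in \eqref{eq:Bed} is invoked to guarantee $D_\bm x F \in L^2_{\eta_t}$ for almost every $\bm x$, and since all summands $a_n$ are non-negative, Tonelli applies without further conditions.
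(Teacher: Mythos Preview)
The paper does not contain a proof of this theorem; it is quoted verbatim from Schulte and Trapp \cite[Theorem~1.1]{SchuTra} and used as a black box in the proof of Theorem~\ref{thm:var}. There is therefore no in-paper argument to compare your proposal against.

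That said, your proposal is correct. The three identities
\[
\V F=\sum_{n\ge 1} a_n,\qquad
t\int \E(D_{\bm x}F)^2\,d\bm x=\sum_{n\ge 1} n\,a_n,\qquad
t^2\int \E(D^2_{\bm x,\bm y}F)^2\,d\bm x\,d\bm y=\sum_{n\ge 2} n(n-1)\,a_n
\]
follow exactly as you describe from the Fock space representation applied successively to $F$, $D_{\bm x}F$ and $D^2_{\bm x,\bm y}F$, using the commutation $T_n(D_{\bm x}F)=T_{n+1}(F)(\bm x,\cdot)$ and Tonelli for the non-negative terms. The Cauchy--Schwarz step is clean, and you correctly observe that your constant $1/(1+\alpha)$ is strictly better than the stated $4/(\alpha+2)^2$ for every $\alpha>0$. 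This is in fact the same chaos-expansion route taken in \cite{SchuTra}; the weaker constant there comes from a slightly different way of combining the series, so your sharpening is genuine but not surprising. The only technical point worth spelling out in a write-up is that the finiteness assumption in \eqref{eq:Bed} guarantees $D_{\bm x}F\in L^2_{\eta_t}$ for $\l_d$-a.e.\ $\bm x$ (and analogously for $D^2_{\bm x,\bm y}F$), which is precisely what licenses the Fock space expansion at that level; you already flag this.
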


\subsubsection{Normal approximation}
We need an upper bound for the Kolmogorov distance between a Poisson functional and a standard Gaussian random variable $N$, where the  Kolmogorov distance of two random variables $X,Y$ is defined by
\begin{equation*}
	\label{Kol}
	d_K(X,Y)=\sup_{x\in\R}\left\vert \P\left(X\le x\right)-\P\left(Y\le x\right)\right\vert.
\end{equation*}
Since convergence in distribution follows from convergence in Kolmogorov distance, this is a helpful tool to derive a central limit theorem. 
To bound the Kolmogorov distance we will use the following theorem, which can be found in \cite[Theorem 6.1]{LaPeSchu}. 
\begin{theorem} \label{thm:clt}
Let $F\in L_{\eta_t}^2$ with $\V F>0$ and denote by $N$ a standard Gaussian random variable. Assume that there are constants $c \geq 1$ and $p_1 , p_2 > 0$ such that for all $\bm x,\bm y \in \R^d$
\begin{equation*}
	\E \left\vert D_\bm x F \right\vert^{4+p_1}\le c
\text{ and }
	\E \left\vert D_{\bm x,\bm y }^2 F \right\vert^{4+p_2}\le c .
\end{equation*}
Then
\begin{align} \label{eq:dk}
d_K\left(\frac{F-\E F}{\sqrt{\V F}},N\right)
\le&
5c \left[
\frac{\Gamma_1^{1/2}}{\V F} +
\frac{ \Gamma_2^{1/2}}{\V F}+\frac{ \Gamma_2}{\left(\V F\right)^{3/2}}+\frac{ \Gamma_2^{5/4}+\Gamma_2^{3/2}}{\left(\V F\right)^2}
+ \frac{\Gamma_3^{1/2}}{\V F}
\right]
\end{align}
with
\begin{align*}
\Gamma_1 & =t\int\left(t\int\P\left(D_{\bm x,\bm y }^2F\neq 0\right)^{p_2/(16+4p_2)}d\bm y \right)^2d\bm x ,
\\
\Gamma_2 & =t\int\P\left(D_{\bm x}F\neq 0\right)^{p_1/(8+2p_1)}d\bm x ,
\\
\Gamma_3 & = t^2\int \P\left(D_{\bm x,\bm y }^2F\neq0\right)^{p_2/(8+2p_2)}d\bm x d\bm y
.
\end{align*}
\end{theorem}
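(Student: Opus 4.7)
The plan is to approach this via the Malliavin--Stein method for Poisson functionals. The starting point is Stein's classical bound, which reduces the Kolmogorov distance between a centred square-integrable random variable $F/\sqrt{\V F}$ and $N$ to controlling quantities of the form $\E[Fg(F) - g'(F)]$ for suitable Stein solutions $g$. On the Poisson space the integration-by-parts identity $\E[Fg(F)] = \E\langle Dg(F),-DL^{-1}F\rangle_{L^2(\mu_t)}$, where $D$ is the difference operator and $L^{-1}$ the pseudo-inverse of the Ornstein--Uhlenbeck generator, converts this into
$$d_K\!\left(\tfrac{F-\E F}{\sqrt{\V F}},N\right) \le \E\bigl|1 - \tfrac{1}{\V F}\langle DF,-DL^{-1}F\rangle\bigr| + R,$$
where $R$ collects third-moment-type remainders involving $D^2F$. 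This is the template I would follow.

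My first step would be to expand both summands in terms of pointwise difference operators. Using the Mehler-type representation $-D_\bm x L^{-1}F = \int_0^\infty e^{-s} P_s(D_\bm x F)\,ds$ with the Poisson Ornstein--Uhlenbeck semigroup $P_s$, together with the contractivity of $P_s$ on $L^p(\mu_t)$, allows $DL^{-1}F$ to be estimated by the same moments as $DF$ itself. After applying Cauchy--Schwarz to the first term and controlling the remainder $R$ by means of the standard second-order estimates (which naturally bring in $D^2_{\bm x,\bm y}F$), one arrives at a bound essentially in terms of $L^2$-norms of $DF$ and $D^2F$, with powers of $\V F$ in the denominator matching those in \eqref{eq:dk}.

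The decisive technical step is then to convert these $L^2$-quantities into the probability-type quantities $\Gamma_1,\Gamma_2,\Gamma_3$. This is done by Hölder's inequality, exploiting the uniform moment assumptions. For instance, writing $(D_\bm x F)^2 = (D_\bm x F)^2\,\id(D_\bm x F \neq 0)$ and applying Hölder with conjugate exponents $(4+p_1)/2$ and $(4+p_1)/(2+p_1)$ gives
$$\E(D_\bm x F)^2 \le \bigl(\E|D_\bm x F|^{4+p_1}\bigr)^{2/(4+p_1)}\,\P(D_\bm x F\neq 0)^{p_1/(4+p_1)} \le c^{2/(4+p_1)}\,\P(D_\bm x F\neq 0)^{p_1/(4+p_1)}.$$
A second Cauchy--Schwarz, absorbing a square root into the outer integration, produces the exponent $p_1/(8+2p_1)$ inside $\Gamma_2$; the analogous two-step argument applied to $\E(D^2_{\bm x,\bm y}F)^2$ yields the exponents $p_2/(8+2p_2)$ in $\Gamma_3$ and $p_2/(16+4p_2)$ in $\Gamma_1$, the latter appearing after also squaring the inner $\bm y$-integration.

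The main obstacle is the bookkeeping around $DL^{-1}F$: unlike $DF$, it has no explicit pointwise formula, so one must avoid it at all costs by routing every estimate through the Mehler semigroup and transferring moment bounds to $DF$ and $D^2F$. A secondary difficulty is tracking precisely which Hölder exponent is needed at each appearance of $DF$ or $D^2F$ to produce the exact fractional powers in $\Gamma_1,\Gamma_2,\Gamma_3$, and to ensure the resulting constants can be absorbed into the universal factor $5c$. Once these ingredients are assembled, the five separate terms in \eqref{eq:dk} simply record the five distinct combinations of $\Gamma_i$ and $\V F$ powers produced by the three sources of error --- first-order, second-order, and remainder --- in the Malliavin--Stein bound.
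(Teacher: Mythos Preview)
The paper does not contain a proof of this statement at all. Theorem~\ref{thm:clt} is quoted verbatim as a black box from Last, Peccati and Schulte \cite[Theorem~6.1]{LaPeSchu}; the paper simply invokes it as an external tool in the proof of Theorem~\ref{thm:CLT}. There is therefore nothing in the paper to compare your proposal against.

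That said, your sketch is a fair high-level summary of how the result is actually established in \cite{LaPeSchu}: Stein's equation on Poisson space, the Mehler-formula representation of $-DL^{-1}F$, contractivity of the Ornstein--Uhlenbeck semigroup to transfer moment bounds from $DL^{-1}F$ back to $DF$, and H\"older/Cauchy--Schwarz to convert moment integrals into the probability-type quantities $\Gamma_i$. If your intention was to reproduce the argument of \cite{LaPeSchu}, the outline is on the right track, though the remainder term $R$ in the Kolmogorov-distance bound is considerably more delicate than you indicate (it involves several distinct terms, some of which require fourth-moment control and careful treatment of indicator functions arising from the non-smoothness of the Stein solution), and the clean constant $5c$ requires nontrivial bookkeeping. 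But for the purposes of the present paper none of this is needed: the theorem is simply cited.
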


\section{Proof of Theorem \ref{thm:exp}}
\label{Esti}
Due to the definition of the Poisson-Delaunay approximation, 
\begin{align*}
	\l_d(A_{\eta_t}) 
	& =
	\sum_{\bm c \in C(D_{\eta_{t}})\cap A} \l_d(D_{\eta_{t}}(\bm c)) 
	\\ & =
	\frac{1}{(d+1)!} \sum_{(\bm x_{0},\ldots,\bm x_{d})\in\eta_{t,\neq}^{d+1}} \id(\eta_{t} (B^\circ (\bm x_{0},\ldots, \bm x_{d}))=0) 
	\\& \hskip4cm \times
	\id(\bm c  (\bm x_0,...,\bm x_d)\in A) \l_d([\bm x_0,...,\bm x_d]). 
\end{align*}
The multivariate Mecke equation \eqref{eq:Mecke} yields
\begin{align*}
	\E \l_d(A_{\eta_t})  
	& =
	t^{d+1}\frac{1}{(d+1)!} \int\limits_{(\R^d)^{d+1}}e^{-t\l_d(B(\bm x_{0},\ldots, \bm x_{d}))}\id(\bm c( \bm x_0,...,\bm x_d)\in A)
	\\& \hskip6.5cm \times
	 \l_d([\bm x_0,...,\bm x_d]) ~d\bm x_0...d\bm x_d.
\end{align*}
Combined with the Blaschke-Petkantschin transformation \eqref{eq:Bla} and \eqref{eq:S}, this leads to
\begin{align*}
	&\E ~\l_d(A_{\eta_t})  
	\\&=
	t^{d+1}\frac{1}{d+1} \int\limits_{\R^d}\int\limits_0^{\infty}\int\limits_{(S^{d-1})^{d+1}}e^{-t\kappa_d r^d}\id(\bm c \in A) r^{d^2+d-1}\l_d([\bm u_0,...,\bm u_d])^2 ~d\bm u_0...d\bm u_d dr d\bm c 
	\\ &=
	t^{d+1}\frac{1}{d+1}\l_d(A) S(d,d,2) \int\limits_0^{\infty}e^{-t\kappa_d r^d} r^{d^2+d-1}~dr
	\\ &=
	\frac{1}{d(d+1) \kappa_d^{d+1}}\l_d(A) S(d,d,2) \Gamma(d+1).
\end{align*}
Since $S(d,d,2)=\frac{d+1}{(d-1)!}\kappa_d^{d+1}$, we finally obtain $\E \l_d(A_{\eta_t})  =\l_d(A)$.

\section{Proof of Theorem \ref{thm:var}}

For the upper bound of the variance we use the Poincar\'e inequality \eqref{eq:poin}. First, we have to bound the difference operator of the Poisson-Delaunay approximation from above. Adding a point $\bm x$ changes all Delaunay cells, for which $\bm x$ lies inside of the circumball. In this area new cells with $\bm x$ as a vertex are formed. In particular, the range that is affected by adding the point $\bm x$ consists of these new cells. To quantify the affected range, we consider the dual Poisson-Voronoi tessellation $V_{\eta_t}$. The additional point $\bm x$ is the center of a new Voronoi cell $V_{\eta_t+\delta_{\bm x}}(\bm x)$. By duality, each facet of the new Voronoi cell is halving one edge of the new Delaunay cells which contain $\bm x$. Thus, all vertices of the new Delaunay cells, and hence all the cells itself, are contained in twice the new Voronoi cell $2 V_{\eta_t+\d_\bm x}(\bm x)$. Since the additional point $\bm x$ changes all Delaunay cells in $D_{\eta_t}$ for which $\bm x$ lies inside of the circumball, the centers of these old cells are closer to $\bm x$ than any other point in $\eta_t$. For this reason, the centers of the involved old cells lie inside of the new Voronoi cell $V_{\eta_t+\d_\bm x}(\bm x)$. Further, again by duality, the centers of the new Delaunay cells are the vertices of the new Voronoi cell and thus are also contained in $2 V_{\eta_t+\d_\bm x}(\bm x)$.

Remind that $r_{\bm x}$ is the radius of the circumball with center $\bm x$ containing  $V_{\eta_t+\delta_{\bm x}}(\bm x)$. We just have shown, that the difference between $\l_d(A_{\eta_t}) $ und $\l_d(A_{\eta_t+\delta_{\bm x}})$ is bounded by the volume of the ball $ B (\bm x,2r_{\bm x})$ which also contains all centers of the involved old and new Delaunay cells.

If $ B (\bm x,2r_{\bm x})$, and thus the centers of all involved Delaunay cells, is completely inside or outside of the set $A$, the Poisson-Delaunay approximation $A_{\eta_t}$ remains unchanged. Let $\partial A$ be the boundary of the set $A$. Thus, only if $\partial A$ intersects the ball $ B (\bm x,2 r_{\bm x})$ the difference operator can be different to zero. In total, we obtain
\begin{equation}\label{eq:Dx<}
\vert D_{\bm x}\l_d (A_{\eta_{t}} )\vert
\le \kappa_{d}2^dr_{\bm x}^{d} \id ( \bm x \in \partial A+2 r_{\bm x} B_d )
= \kappa_{d}2^dr_{\bm x}^{d} \id ( r_{\bm x} \geq {\textstyle \frac 12} d(\bm x , \partial A )).
\end{equation}
To make use of the Poincar\'e inequality \eqref{eq:poin} we need the squared difference operator. Thus, we have to estimate $\E r_{\bm x}^{2d} $. By Lemma \ref{le:rx} and integrating with respect to $\bm x$ we obtain
\begin{align*}
\V \l_d (A_{\eta_{t}} )
& \le
\kappa_{d}^{2} 2^{2d} S(d,d,1) t^{d+1}\int\limits_{0}^{\infty}e^{-t\kappa_dr^d}r^{d^2+2d-1}\l_d (\partial A+ 2r B_d) dr
.
\end{align*}
Because $A$ is assumed to be convex, we estimate the volume of ${\partial A+ 2 r B_d}$ using the Steiner formula \eqref{eq:stein},
\begin{equation}\label{eq:stein-estimate}
\l_d \left( \partial A + 2r B_d\right)\le2\left(\l_d (A+ 2r B_d)-\l_d (A)\right)=2\sum_{i=1}^{d} 2^i \kappa_iV_{d-i}(A)r^i.
\end{equation}
In total, we receive
\begin{align*}
\V \l_d(A_{\eta_{t}})
& \le
\sum_{i=1}^d \kappa_{d}^{2} 2^{2d+i+1}t^{d+1} S(d,d,1)\kappa_i V_{d-i}(A) \int\limits_{0}^{\infty}e^{-t\kappa_dr^d}r^{d^2+2d-1+i}dr 
\\ & =
\sum_{i=1}^d 2^{2d+i+1}  d^{-1} \kappa_{d}^{-d -\frac{i}{d}} \kappa_i    t^{-1- \frac{i}{d}} S(d,d,1)  V_{d-i}(A)  \Gamma\left(\frac{d^2+2d+i}{d}\right) . 
\end{align*}
Finally, we summarize all factors depending on $i$, $d$ and $A$ to a constant $c_{d,A}$ and obtain
\begin{equation*} \label{eq:lang}
\V \left(A_{\eta_{t}}\right) \le
c_{d,A}~t^{-1-\frac{1}{d}}
\end{equation*}
for $t$ bounded away from zero, e.g. $t \geq ({8d}/{r_A} )^d $.
This is the upper bound for the variance of $\l_d(A_{\eta_t}) $ in Theorem \ref{thm:var}.

\medskip
Next, we will proof the existence of a variance lower bound of the same order. As mentioned, it is not easy to estimate the difference operator from below, because it attains positive and negative values. For the Voronoi approximation, this problem was easy to solve, since $D_\bm x (V_{\eta_t,A })\ge 0$ for $\bm x \in A$ and $D_\bm x (V_{\eta_t,A })\le 0$ for $\bm x \notin A$, see \cite{Schulte}. This is not true in the case of the Delaunay approximation, and thus a more complex construction is required. We apply the theorem by Trapp and Schulte, Theorem \ref{thm:TraSchu}, and thus have to show the existence of an $\alpha$ such that the condition \eqref{eq:Bed} from Theorem \ref{thm:TraSchu} is fulfilled. For this purpose we first have to determine a lower bound for $ \E ( D_{\bm x} \l_d(A_{\eta_t}) )^2 $.

For $\bm y\in\partial A$, let $r_A(\bm y)$ be the largest ball, which contains $\bm y$ and is contained in $A$. 
By Lemma 4 in \cite{floating} (see also \cite[Lemma 1.5.2]{DissHug}) we have
\begin{equation}
\label{eq:Haus}
\mathcal{H}^{d-1}\left(\{\bm y \in\partial A \colon r_A(\bm y)\ge\e \}\right)
\ge
\left(1-\frac{\e }{r_A}\right)^{d-1} \kappa_1 V_{d-1}(A)
\end{equation}
for $\e \leq r_A$.
Let $\text{proj}_{\partial A}(\bm x)$ be the metric projection of a point $\bm x\in\R^d$ on $\partial A$. We consider the set
\begin{equation*}
M_\e :=\left\{ \bm x\in A^c \cap (\partial A + \e B_d )\colon r_A\left(\text{proj}_{\partial A}(\bm x)\right)\ge 4d\e  \right\}
\end{equation*}
for $\e \le \frac{r_A}{8d}$. This is the set of all points outside $A$ but $\e$-close to its boundary, which have a $4d\e$-ball inside $A$.
The volume of the set $M_\e $ can be bounded from below by \ref{eq:Haus} and the condition $\e \le \frac{r_A}{8d}$. Since $A$ is convex we obtain
\begin{align}
\l_d \left(M_{\e }\right)
&\ge 
\mathcal{H}^{d-1}\left(\left\{\bm y \in\partial A \colon r_A(\bm y)\ge 4d \e \right\}\right)\e   
\notag \\&\ge 
\left(1-\frac{4d \e }{r_A}\right)^{d-1} \kappa_1 V_{d-1}(A) \e
\notag \\ &\ge
2^{-d+2} V_{d-1}(A) \e.
\label{eq:Vol}
\end{align}
We consider the difference operator of the Poisson-Delaunay approximation, but now only for points $\bm x$ in $M_\e $. We will show that with positive probability an event $E$ occurs which guarantees that the squared difference operator is of order $t^{-2}$.

To this end we use the construction from Lemma \ref{le:simplex}. For $\bm x \in M_\e$ let $L_\bm x$ be the line containing $\bm x$ and $\text{proj}_A(\bm x)$.
Denote by $\bm c  \in L_\bm x$ the point in $A$ with distance $2 \e$ to $\text{proj}_A(\bm x)$. Define $T$ to be a a regular simplex with center $\bm c $ and inradius $5 \e$ (thus having circumradius $5 d \e$), chosen such that one vertex $\bm a _{d+1} \in A$ is on $L_\bm x$ with distance $5 d \e$ to the center $\bm c $, and thus the vertices $\bm a _1, \dots, \bm a _d$ are contained in a hyperplane $H \subset A^c$ orthogonal to $L_\bm x$ with $d(H\cap L_\bm x,\bm c )=5 \e $. This construction is chosen in such a way that $B (\bm c ,\e) \subset A$ and $\bm x \in B ( \bm c , 3 \e)$.


We apply Lemma \ref{le:simplex} with $s= 5\e$. We form a ball around each of the vertices $\bm a _1, ..., \bm a _{d+1}$ with radius $5\d \e  $ where $\d$ is chosen according to Lemma \ref{le:simplex}. Because $\d < \frac 15 $ the balls $ B (\bm a _i,5 \d \e  ) $ are disjoint.
We define the event $E$ by assuming that in each of the balls $ B (\bm a _i,5 \d \e  ) $  there is precisely one point $\bm x_i$ from $\eta_t$, i.e. $\eta_t \cap  B (\bm a _i, 5 \d  \e  ) = \{\bm x_i\}$ for $i\in\{1,..,d+1\}$, and further that all other points from $\eta_t$ are far away from $\bm c$,
\begin{equation*}
\eta_t \left( B (\bm c , 34 d^2  \e)\setminus \bigcup_{i\in\{1,...,d+1\}} B (\bm a _i,5 \d \e )\right)= 0.
\end{equation*}

This implies that the simplex $Z=[\bm x_1, \dots, \bm x_{d+1}]$ with $\bm x_i \in B(\bm a _i, 5 \d \e)$ is a Delaunay cell of $\eta_t$. Assume that $B$ is a ball containing $d$ points from the set of all vertices $\{\bm x_1,...,\bm x_{d+1}\}$ and one further point $\bm y \in\eta_t \setminus B(\bm c , 34 d^2  \e)$ on its boundary. Then such a ball $B$ has radius at least $ 15 d^2 \e$, since
$$
\| \bm y -\bm x_i\|\ge  34 d^2\e - \| \bm c -\bm x_i\| \geq 34 d^2\e - (5d\e + 5 \d\e) \geq  30 d^2\e
$$
for all $i\in\{1,...,d+1\}$. According to Lemma \ref{le:simplex} the ball $B$ is disjoint from $B(\bm c , 4 \e)$. In particular, it can not contain $\bm x$ which is in $B(\bm c , 3\e)$.
Under this conditions the Delaunay cell $Z$ is the only cell, which contains $\bm x$ in the circumball and thus the only cell which changes when adding the point $\bm x$ to $\eta_t$. Furthermore, the simplex $Z_\bm x=[\bm x_1, \dots, \bm x_{d}, \bm x] \subset Z$ is a Delaunay cell of $\eta_t+\delta_\bm x$ and the center of this cell is not contained in $A$.

Summarizing, by Lemma \ref{le:simplex} the Delaunay simplex $Z=[\bm x_1, \dots, \bm x_{d+1}]$ has its center $\bm c'$ in $B (\bm c ,\e)$, which is a subset of $A$, and therefore belongs to $A_{\eta_t}$. But the new Delaunay cell $Z_\bm x=[\bm x_1, \dots, \bm x_{d}, \bm x]$, which is a subset of $Z$, has its center in $A^c$ and thus does not belong to the Poisson-Delaunay approximation $A_{\eta_t+\delta_\bm x}$. Thus, given the event $E$ we have
$$
D_\bm x \l_d(A_{\eta_t})^2  \ge \l_d(Z_{\bm x})^2.
$$

The volume of the simplex $Z_\bm x$ obviously depends on $\e $ and the dimension $d$. Since the height of the simplex is bounded from below by $\e \cdot(2 - 5 \d)$, and the inradius of the base simplex $[\bm x_1,...,\bm x_d]$ is bounded by $\e \cdot(5 \sqrt{ \frac {d+1}{d-1} }-5 \d)$, there is a constant $\underline{c_d}>0$ such that
\begin{equation*}
\l_d (Z_\bm x)
\geq \underline{c_d}\e ^d ,
\end{equation*}
and choosing $\e = t^{- \frac 1d}$ gives
\begin{equation}
\label{eq:E}
\E \left(D_\bm x \l_d(A_{\eta_t})^2 |E\right)\ge \underline{c_d}^2 t^{-2}.
\end{equation}
Because $\e = t^{- \frac 1d}$, the probability that the conditions $E$ occur is
\begin{align}\label{eq:P}
\P 
&
\Big( \eta_t\big(B(\bm c ,34 d^2 \e )\setminus  \! \bigcup_{i\in\{1,...,d+1\}} \! B(\bm a _i,5\d \e )\big)=0,\ \eta_t\left(B(\bm a _i, 5\d \e )\right)=1,\, i=1,\dots,d+1  \Big)
\notag \\
& =
e^{-t\left(34 d^2 \e \right)^d \kappa_d + (d+1) t (5 \d \e)^d \kappa_d}
\prod_{i=1}^{d+1} t \left(5 \d \e \right)^d \kappa_d e^{-t\left(5 \d \e \right)^d\kappa_d}
= c_d' >0 .
\end{align}
Next, we apply the law of the total expectation. Together with \eqref{eq:E} and \eqref{eq:P} we obtain
\begin{align}\label{eq:upperboundTH3}
t \int \E \left(D_\bm x\left(\l_d(A_{\eta_t}) \right)^2\right)d \bm x
&\ge \nonumber
t \int \E \left(D_\bm x \l_d(A_{\eta_t})^2 | E\right)\cdot\P(E) \id(\bm x \in M_{t^{-\frac 1d}})\, d\bm x
\\ & \ge \nonumber
t \underline{c_d}^2 t^{-2} c_d' \l_d(M_{t^{-\frac 1d}})
\\ & \ge
 \underline{c_d}^2 c_d'
2^{-d+2} V_{d-1}(A) t^{-1- \frac 1d}
\end{align}
for $t \geq ({8d}/{r_A} )^d $. The last step follows from the volume estimation \eqref{eq:Vol} of $M_\e$.

\medskip
To compute $\alpha$ in condition \eqref{eq:Bed}, we also need an upper bound for the second order difference. The second order difference
$
D_{\bm x,\bm y }^2 \l_d(A_{\eta_t}) = D_{\bm y } D_{\bm x} \l_d(A_{\eta_t})
$
is equal to zero, when the distance between the points $\bm x$ and $\bm y $ is too large. Recall that adding $\bm x$ generates new Delaunay cells which have as centers the vertices of the new Voronoi cell generated by $\bm x$ and thus are contained in $B(\bm x, 2r_{\bm x})$, where again $r_\bm x$ is the radius of the circumball of the Voronoi cell $V_{\eta_t+\delta_x}(x)$. If $\bm y$ is not in this region then it cannot change the Delaunay cells generated by $\bm x$ and thus
\begin{align*}
|D_{\bm x,\bm y }^2 \l_d(A_{\eta_t})|
&=
|D_{\bm x,\bm y }^2 \l_d(A_{\eta_t})| \id( \bm y \in B(\bm x, 2r_{\bm x}))
\\ & \leq
\big( |D_{\bm x} \l_d(A_{\eta_t+ \d_\bm y})| + |D_{\bm x} \l_d(A_{\eta_t})| \big) \id( \bm y \in B(\bm x, 2r_{\bm x})).
\end{align*}
Next, we use \eqref{eq:Dx<} and note that $r_{\bm x} = r_{\bm x}(\eta_t) \geq r_{\bm x}(\eta_t+\d_{\bm y}) $ because adding a new point cannot increase the Voronoi cell around $\bm x$. Thus
\begin{align}\label{eq:Dxy<}
|D_{\bm x,\bm y }^2 \l_d(A_{\eta_t})|
& \leq
2 \kappa_d 2^{d} r_{\bm x}^d \id (\bm x \in \bd A + 2 r_{\bm x} B_d)  \id( \bm y \in B(\bm x, 2r_{\bm x}))
.
\end{align}
Note that integrating $ \id( \bm y \in B(\bm x, 2r_{\bm x})) $ with respect to $\bm y$, which occurs in the left hand side of \eqref{eq:Bed}, leads to a factor $\kappa_d 2^d r_{\bm x}^d$.
Again, rewriting $ \bm x \in \bd A + 2 r_{\bm x} B_d$ as $r_{\bm x} \leq {\textstyle \frac 12} d(\bm x, \bd A)$, using Lemma \ref{le:rx} and integrating with respect to $\bm x$ we obtain
\begin{align*}
t^2 \int\limits_{(\R^{d})^2} \E \left( D_{\bm x,\bm y }^2 \l_d (A_{\eta_t})\right)^2 d\bm x d\bm y
& \le
\kappa_d^2 2^{3d+2} S(d,d,1)\,  t^{d+2}
\\ & \hskip1cm \times
\int\limits_{0}^{\infty}
e^{-t\kappa_{d} r^{d}} r^{d^2+3d-1} \l_d (\bd A + 2 r_{\bm x} B_d)  \, dr
.
\end{align*}
The Steiner formula \eqref{eq:stein}, in particular the estimate \eqref{eq:stein-estimate} gives
\begin{equation}\label{eq:lowerboundTH3}
t^2 \int\limits_{(\R^{d})^2} \E \left( D_{\bm x,\bm y }^2 \l_d (A_{\eta_t})\right)^2 d\bm x d\bm y
\le \tilde{c}_{d,A}t^{-1-\frac{1}{d}}.
\end{equation}
with a constant $\tilde{c}_{d,A}$ depending on the dimension $d$ and the convex set $A$.

\medskip
Now all preparations to apply Theorem \ref{thm:TraSchu} are done. By \eqref{eq:upperboundTH3} and \eqref{eq:lowerboundTH3}, the condition \eqref{eq:Bed} is fulfilled for
$$
\alpha=\frac{\tilde{c}_{d,A}}{\underline{c_d}^2 c_d'
2^{-d+2} V_{d-1}(A)}
$$
for $t$ bounded away from zero, e.g. $t \geq ({8d}/{r_A} )^d $.
Inserting our results into \eqref{eq:V} finally yields the lower bound for the variance of $\l_d(A_{\eta_t})$ in Theorem \ref{thm:var}.

\section{Proof of Theorem \ref{thm:CLT}}

In order to apply Theorem \ref{thm:clt} we consider the suitable normalized Poisson functional
$t \l_d(A_{\eta_t}) $. First, we put $p_1=p_2=1$  and check the conditions that
$\E \vert D_{\bm x} ( t \l_d(A_{\eta_t})) \vert^5 $ and $\E \vert D_{\bm x, \bm y}^2( t \l_d(A_{\eta_t}))\vert^5$
are finite. Because of \eqref{eq:Dx<} and \eqref{eq:Dxy<} and using Lemma \ref{le:rx} it suffices to show, that
$$
t^5\E r_{\bm x}^{5d}
\leq
S(d,d,1)\,  t^{d+5} \int\limits_{0}^{\infty} e^{-t\kappa_{d} r^{d}} r^{d^2+5d-1} \, dr
$$
is bounded, which is immediate.

The lower variance bound for the Poisson-Delaunay approximation $\l_d(A_{\eta_t}) $ in Theorem \ref{thm:var} gives
$$
\V ( t \l_d(A_{\eta_t}))
= t^2 \V \l_d(A_{\eta_t}) \ge t^{1-\frac{1}{d}} \underline{c_{d,A}}.
$$
In particular, $\V ( t \l_d(A_{\eta_t}))> 0$ for $t\ge (\frac{8d}{r_A})^d$. 

Thus, we can apply Theorem \ref{thm:clt}.
It remains to estimate $\Gamma_1, \Gamma_2$ and $\Gamma_3$.
To start, we use Lemma \ref{le:rx} with $k=0$,
\begin{align*}
\P (r_{\bm x} \geq s )^{\frac 1{m}}
& \le
S(d,d,1)^{\frac 1{m}} \,  t^{\frac d{m}}  \left(
\int\limits_{s}^\infty
e^{-t\kappa_{d} r^{d}} r^{d^2-1} \, dr
\right)^{\frac 1{m}} .
\end{align*}
Elementary calculations show that 
$ e^{-x} x^\b \leq \b^\b e^{-\frac{e-1}{e} x} \leq \b^\b e^{-\frac{1}{2} x} $. Therefore, we have
$$
e^{-t\kappa_{d} r^{d}} r^{d^2-d}
\leq
(d-1)^{d-1}  \k_d^{-(d-1)}  t^{-(d-1)} e^{-\frac 12 t\kappa_{d} r^{d}}
$$
and hence
\begin{align}\label{eq:PDneq0}
\left(\P (r_{\bm x} \geq s) \right)^{\frac 1{m}}
& \le \nonumber
S(d,d,1)^{\frac 1{m}} (d-1)^{\frac{d-1}{m}} \k_d^{-\frac{d}{m}}
\left(
\int\limits_{s}^\infty
e^{-\frac 12 t\kappa_{d} r^{d}}
t \k_d r^{d-1} \, dr
\right)^{\frac 1{m}}
\\ & =  \nonumber
S(d,d,1)^{\frac 1{m}} (d-1)^{\frac{d-1}{m}} 2^{\frac 1{m}} d^{-\frac 1{m}}   \k_d^{-\frac{d}{m}}
e^{-\frac 1{2m} t\kappa_{d} s^{d}}
\\ & =
S(d,d,1)^{\frac 1{m}} (d-1)^{\frac{d-1}{m}}2^{\frac 1{m}-1} m^{-1} d^{1-\frac 1{m}}  \k_d^{1-\frac{d}{m}}\, t
\int\limits_{s}^\infty
e^{-\frac 1{2m} t\kappa_{d} r^{d}}
r^{d-1}
dr
\end{align}
where in the last line we have written the exponential function again as an integral.

First, we insert this expression into the definition of
$\Gamma_2$,
$$
\Gamma_2 = t\int\limits_{\R^d} \P\left(D_{\bm x} ( t \l_d(A_{\eta_t}))  \neq 0\right)^{1/10}d\bm x
. $$
Because \eqref{eq:Dx<} implies
$$
\id( D_{\bm x} ( t \l_d(A_{\eta_t})) \neq 0)
\leq
\id(r_{\bm x} \geq \frac 12 d(\bm x, \bd A)) ,
$$
we put $m=10$ and $s= \frac 12 d(\bm x, \bd A)$ in \eqref{eq:PDneq0} and and obtain by Fubini's theorem
\begin{align*}
\Gamma_2
& \le
c_d \,  t^2
\int\limits_{0}^\infty
\int\limits_{\R^d}
e^{-\frac 1{20} t\kappa_{d} r^{d}}
r^{d-1}
\id(r \geq \frac 1{2} d(\bm x, \bd A))
d \bm x dr
\\ & \le
c_d \,  t^2
\int\limits_{0}^\infty
e^{-\frac 1{20} t\kappa_{d} r^{d}}
r^{d-1}
\l_d( \bd A + 2r B_d)
dr.
\end{align*}
Estimating the parallel volume as in \eqref{eq:stein-estimate} yields
\begin{align*}
\Gamma_2
& \le
\sum_{i=1}^d c_{d,A,i} \,  t^2
\int\limits_{0}^\infty
e^{-\frac 1{20} t\kappa_{d} r^{d}} r^{d+i-1}
dr
\le
c_{d,A} t^{1- \frac 1d} .
\end{align*}
Thus, $\Gamma_2$ is of order $t^{1-\frac{1}{d}}$.

Next, we turn our attention to the second order differences. Inequality \eqref{eq:Dxy<} implies
\begin{align*}
\id(D_{\bm x,\bm y }^2 (t \l_d(A_{\eta_t})) \neq 0)
& \leq
\id (\bm x \in \bd A + 2 r_{\bm x} B_d)  \id( \bm y \in B(\bm x, 2r_{\bm x}))
\\ & =
\id (r_{\bm x} \geq \frac 12 \max\{ d(\bm x ,\bd A ),  \| \bm x - \bm y \|  \} )
.
\end{align*}
Put $s=\frac 12 \max\{ d(\bm x ,\bd A ),  \| \bm x - \bm y \|  \}$. For $\Gamma_3$ it follows as above from \eqref{eq:PDneq0} that
\begin{align*}
\Gamma_3
& =
t^2\int\limits_{(\R^d)^2} \P\left( D_{\bm x,\bm y }^2 (t \l_d(A_{\eta_t})) \neq 0 \right)^{1/10}d\bm x d\bm y
\\ & \leq
t^2\int\limits_{(\R^d)^2} \P\left( r_{\bm x} \geq s \right)^{1/10}d\bm x d\bm y
\\ & \leq
c_d t^3
\int\limits_{0}^\infty
\int\limits_{(\R^d)^2}
e^{-\frac 1{20} t\kappa_{d} r^{d}}
r^{d-1} \id(r \geq \frac 12 \max\{ d(\bm x ,\bd A ),  \| \bm x - \bm y \|  \})
d\bm x d\bm y
dr
\\ & =
c_d t^3
\int\limits_{0}^\infty
\int\limits_{(\R^d)^2}
e^{-\frac 1{20} t\kappa_{d} r^{d}}
r^{d-1} \id( \bm x \in \bd A + 2 r B_d,\  \bm y \in B(\bm x, 2r) )
d\bm x d\bm y
dr
.
\end{align*}
The integration with respect to $\bm y$ yields a factor $2^d r^d \kappa_d$, the integration with respect to $\bm x$ as before a factor $\l_d(\bd A + 2 r B_d)$. The estimate \eqref{eq:stein-estimate} for the parallel volume leads to
\begin{align*}
\Gamma_3
 & \leq
\sum_{i=1}^d c_{d,A,i} t^3
\int\limits_{0}^\infty
e^{-\frac 1{20} t \kappa_{d} r^{d}}
r^{2d+i-1} dr
\leq
c_{d,A} t^{1-\frac 1d }
,
\end{align*}
and thus also $\Gamma_3$ is of order $t^{1-\frac{1}{d}}$.

Finally, for $\Gamma_1$ the same considerations with $s=\frac 12 \max\{ d(\bm x ,\bd A ),  \| \bm x - \bm y \|  \}$ but now with $m=20$ lead to
\begin{align*}
\Gamma_1
& =
t^3 \int\limits_{\R^d} \left( \int\limits_{\R^d} \P\left(D_{\bm x,\bm y }^2 (t \l_d(A_{\eta_t})) \neq 0\right)^{1/20}d\bm y \right)^2d\bm x
\\ & \leq
 t^3 \int\limits_{\R^d} \left( \int\limits_{\R^d} \P\left( r_{\bm x} \geq s \right)^{1/20}d\bm y \right)^2d\bm x
\\ & \leq
c_d t^5 \int\limits_{\R^d}
\left( \int\limits_0^\infty \int\limits_{\R^d}
e^{-\frac 1{40} t\kappa_{d} r^{d}}
r^{d-1}
\id( \bm x \in \bd A + 2 r B_d,\,  \bm y \in B(\bm x, 2r) )
dr
d\bm y \right)^2  d\bm x
.
\end{align*}
The integration with respect to $\bm y$ yields a factor $2^d r^d \kappa_d$. Then we write $\left(\int h(r) dr\right)^2$ as $\int\int h(r)h(s) dr ds$ and integrating with respect to $\bm x$ leads to a factor \linebreak ${\l_d(\bd A + 2 \min\{r,s\} B_d)}$,
\begin{align*}
\Gamma_1
& \leq
c_d t^5 \int\limits_{\R^d}
\left( \int\limits_0^\infty
e^{-\frac 1{40} t\kappa_{d} r^{d}}
r^{2d-1}
\id( \bm x \in \bd A + 2 r B_d)
dr
\right)^2 d\bm x
\\ & \leq
c_d t^5 \int\limits_{\R^d}
\int\limits_0^\infty
\int\limits_0^\infty
e^{-\frac 1{40} t\kappa_{d} (r^{d}+s^d)}
(rs)^{2d-1}
\id( \bm x \in \bd A + 2 \min\{r,s\} B_d)
dr ds \bm x
\\ & =
c_d t^5
\int\limits_0^\infty
\int\limits_0^\infty
e^{-\frac 1{40} t\kappa_{d} (r^{d}+s^d)}
(rs)^{2d-1}
\l_d( \bd A + 2 \min\{r,s\} B_d)
dr ds
.
\end{align*}
Using again the Steiner type estimate \eqref{eq:stein-estimate} gives
$$
\Gamma_1
\leq
\sum_{i=1}^d c_{d,A,i} t^{5}
\int\limits_0^\infty
\int\limits_0^\infty
e^{-\frac 1{40} t \kappa_{d} (r^{d}+s^d)}
(rs)^{2d-1} \min\{r,s\}^i dr ds
\leq c_{d,A} t^{1- \frac 1d}
.
$$
Thus, $\Gamma_1, \Gamma_2$ and $\Gamma_3$ are of order $t^{1-\frac{1}{d}}$.
Inserting all results in \eqref{eq:dk} finally leads to
\begin{align*}
d_K \left(\frac{\l_d(A_{\eta_t}) -\E \l_d(A_{\eta_t}) }{\sqrt{\V \l_d(A_{\eta_t}) }},N\right)
=
d_K\left(\frac{( t \l_d(A_{\eta_t}))-\E ( t \l_d(A_{\eta_t}))}{\sqrt{\V ( t \l_d(A_{\eta_t}))}},N\right)
\le
c_1 t^{-\frac{1}{2}+\frac{1}{2d}}.
\end{align*}

\textsl{Remark:} The abstract central limit theorem by Last, Pecatti and Schulte, see Theorem \ref{thm:clt}, also provides bounds for the Wasserstein distance. Since this bound has quite similar components compared to the bound for the Kolmogorov distance, the Wasserstein distance between the Poisson-Delaunay approximation and a Gaussian random variable has the same order. From both, convergence in Wasserstein distance and convergence in Kolmogorov distance, the existence of a central limit theorem can be concluded.

\section{Proof of Theorem \ref{thm:sym}}
\label{Sym}
Unlike before, $A$ can be any Borel set with bounded perimeter. We split the symmetric difference in the two parts outside and inside of the set $A$. Thus, we can rewrite the expected volume in the following way
\begin{equation*}
\E \l_d (A \Delta A_{\eta_t})=\E \l_d ( A_{\eta_t} \setminus A)+\E \l_d (A \setminus A_{\eta_t}).
\end{equation*}
Because $0=\E \l_d(A_{\eta_t}) - \l_d(A)  = \E \l_d ( A_{\eta_t} \setminus A) - \E \l_d (A \setminus A_{\eta_t})$,
we see that the parts inside and outside of $A$ have in mean the same size (although their probability distribution is different). Thus, $\E \l_d (A \Delta A_{\eta_t})=2\E \l_d ( A_{\eta_t}\setminus A)$, and it suffices to consider the first summand $\E \l_d ( A_{\eta_t}\setminus A)$.
\begin{align*}
\E \l_d (A \Delta A_{\eta_t})
& =
2 \E \int\limits_{\R^d} \id(\bm y  \in A^c,\ \bm y  \in A_{\eta_t}) d\bm y
\\ & =
2 \E \int\limits_{\R^d} \id(\bm y  \in A^c)
\frac{1}{(d+1)!}  \sum_{\left(\bm x_0,...,\bm x_{d}\right)\in\eta_{t_{\neq}}^  {d+1}} 
\id \left(\eta_t(B^{\circ}(\bm x_0, \dots, \bm x_{d})) =0\right) 
\\ & \hskip2cm \times
\id \left(\bm c (\bm x_0, \dots, \bm x_{d}) \in A\right)
\id (  \bm y  \in [\bm x_0, \dots, \bm x_{d} ]) d\bm y
\\ & =
\frac{2}{(d+1)!}  t^{d+1} \int\limits_{\R^d} \int\limits_{(\R^d)^{d+1}}
\id\left(\bm y  \in A^c\right) \id \left(\bm c (\bm x_0, \dots, \bm x_{d})  \in A\right)
\\ & \hskip2cm \times
e^{- t \l_d (B (\bm x_0, \dots, \bm x_{d}) )}\id (  \bm y  \in [\bm x_0, \dots, \bm x_{d} ]) d\bm x_0 \dots d\bm x_{d}\, d\bm y
.
\end{align*}
Here the last step follows from the multivariate Mecke formula \eqref{eq:Mecke}. Next, we use the Blaschke-Petkantschin integral transformation \eqref{eq:Bla}, which results in
\begin{align*}
\E \l_d ( A_{\eta_t}\setminus A)
& =
\frac{2}{d+1} t^{d+1}  \int\limits_{(\R^d)^2} \id(\bm y  \in A^c)  \id (\bm c  \in A)  \int\limits_0^\infty \int\limits_{(S^{d-1})^{d+1}} e^{- t r^d \kappa_d} r^{d^2-1}
\\ & \times
\id \left(  \bm y  \in [\bm c +r \bm u_0, \dots, \bm c +r \bm u_{d} ]\right)  \l_d([\bm u_0, \dots ,\bm u_{d}]) d\bm u_0 \dots d \bm u_{d}\,dr\, d\bm c  d\bm y .
\end{align*}
Substituting $r= t^{- \frac 1d} s$ and $\bm y = \bm c + t^{- \frac 1d} sx \bm u$ by new variables $s,x \geq 0,\, \bm u \in S^{d-1}$ gives
\begin{align*}
\E \l_d (A \Delta A_{\eta_t})
& =
\frac{2}{d+1} 
\int\limits_{S^{d-1}} \int\limits_0^\infty \int\limits_0^\infty  \int\limits_{\R^d} 
\id\left(\bm c \in (A^c-  t^{- \frac 1d} sx \bm u) \cap A\right)  d\bm c \  e^{- s^d \k_d}  s^{d^2+d-1} 
\\ & \times 
\int\limits_{(S^{d-1})^{d+1}}  \id (x \bm u \in [ \bm u_0, \dots, \bm u_{d} ])\l_d([\bm u_0, \dots ,\bm u_{d}]) d\bm u_0 \dots d\bm u_{d}\, x^{d-1}  ds dx d\bm u
\\ & =
\frac{2}{d+1} 
\int\limits_{S^{d-1}} \int\limits_0^\infty \int\limits_0^\infty  
\l_d\left((A^c-  t^{- \frac 1d} sx \bm u) \cap A\right)  \,  e^{- s^d \k_d}  s^{d^2+d-1} 
\\ & \times 
\int\limits_{(S^{d-1})^{d+1}}  \id (x \bm u \in [ \bm u_0, \dots, \bm u_{d} ])\l_d([\bm u_0, \dots ,\bm u_{d}]) d\bm u_0 \dots d\bm u_{d}\, x^{d-1}  ds dx d\bm u
.
\end{align*}
Here the covariogram of $A$ occurs in the integrand,
$$ \l_d ((A^c-  t^{- \frac 1d} sx \bm u ) \cap A)
=
g_A(0)- g_A(-t^{- \frac 1d} sx \bm u).$$
Galerne's Theorem \ref{thm:per} tells us that
\begin{align*}
	\lim_{t\to\infty}\frac{\l_d ((A^c-  t^{- \frac 1d} sx \bm u ) \cap A)}{t^{- \frac 1d} sx}=
	- \frac{\partial g_A}{\partial \bm u} (0),
\end{align*}
and that $g_A$ is Lipschitz with a bounded Lipschitz constant. Hence, we can apply Lebesgue's dominated convergence theorem, which leads to
\begin{align*}
\lim_{t \to \infty} t^{\frac 1d}\,
\E \l_d (A \Delta A_{\eta_t})
& =
- \frac{2}{d+1} 
\int\limits_{S^{d-1}} \int\limits_0^\infty \int\limits_0^\infty  
\frac{\partial g_A}{\partial \bm u} (0) \,  e^{- s^d \k_d}  s^{d^2+d} 
\\ & \hskip-1cm \times 
\int\limits_{(S^{d-1})^{d+1}}  \id (x \bm u \in [ \bm u_0, \dots, \bm u_{d} ])\l_d([\bm u_0, \dots ,\bm u_{d}]) d\bm u_0 \dots d\bm u_{d}\, x^{d}  ds dx d\bm u
.
\end{align*}
Because the spherical measure in the inner integral is rotation invariant, we can apply equation \eqref{eq:2} from Theorem \ref{thm:per} and obtain
\begin{align}\label{eq:symmdiff}
\lim_{t \to \infty} 
& \nonumber
t^{\frac 1d}\, \E \l_d (A \Delta A_{\eta_t})
= 
 \frac{2}{d+1} \k_{d-1} \Per(A)
\int\limits_0^\infty \int\limits_0^\infty
e^{- s^d \k_d}  s^{d^2+d} \, ds
\\ &  \nonumber \  \times
\int\limits_{(S^{d-1})^{d+1}}  \id (x \bm e_1 \in [ \bm u_0, \dots, \bm u_{d} ])\l_d([\bm u_0, \dots ,\bm u_{d}]) d\bm u_0 \dots d\bm u_{d}\, x^{d}  dx
\\ &=  \nonumber
\frac{2}{d(d+1)} \k_{d-1} \k_d^{- d-1 - \frac 1d } \Gamma\left(d+1+\frac 1d\right)  \Per(A)
\\ & \nonumber \  \times
\int\limits_0^{\infty} \int\limits_{(S^{d-1})^{d+1}}
\id (x \bm e_1 \in [ \bm u_0, \dots, \bm u_{d} ])  \l_d([\bm u_0, \dots , \bm u_{d}]) d\bm u_0 \dots d\bm u_{d}\, x^{d} dx
\\& =
c_d  \Per(A).
\end{align}
Thus, the expectation of the symmetric difference is of order $t^{-\frac 1d}$.

\medskip
\textit{Remark:}
The same order $t^{-\frac 1d}$ for the error of the symmetric difference occurs for the Poisson Voronoi Approximation, as shown in \cite{HevRei}\cite{ReiSpoZap}. Thus, it is of interest to compute the constant $c_d$ in \eqref{eq:symmdiff} and to compare it to the analogous constant of the Poisson-Voronoi approximation.

First, observe that $ x \bm e_1 \in [ \bm u_0, \dots, \bm u_{d} ]$ with $\bm u_i \in S^{d-1}$ implies  that $x\le 1$ and thus $x^d\le x^{d-1}$. This allows us to introduce in the integral above polar coordinates, using again the rotation invariance of the spherical measure. We obtain
\begin{align*}
&
\int\limits_0^\infty \int\limits_{(S^{d-1})^{d+1}}  \id (x \bm e_1 \in [ \bm u_0, \dots, \bm u_{d} ])\l_d([\bm u_0, \dots ,\bm u_{d}]) d\bm u_0 \dots d\bm u_{d}\, x^{d}  dx
\\ & \leq
\frac{1}{d\kappa_d} \int\limits_{S^{d-1}} \int\limits_0^{\infty} \int\limits_{(S^{d-1})^{d+1}}
\id (x \bm u \in [ \bm u_0, \dots, \bm u_{d} ])  \l_d([\bm u_0, \dots , \bm u_{d}]) d\bm u_0 \dots d\bm u_{d}\, x^{d-1} dx d\bm u
\\ & =
\frac{1}{d\kappa_d}  \int\limits_{(S^{d-1})^{d+1}} \int\limits_{B_d}
\id (\bm x \in [ \bm u_0, \dots, \bm u_{d} ])  d\bm x \ \l_d([\bm u_0, \dots , \bm u_{d}]) d\bm u_0 \dots d\bm u_{d}
\\ & =
\frac{1}{d\kappa_d}  \int\limits_{(S^{d-1})^{d+1}} \l_d([\bm u_0, \dots , \bm u_{d}])^2 d\bm u_0 \dots d\bm u_{d}
= \frac{1}{d\kappa_d}  S(d,d,2)
.
\end{align*}
Since $S(d,d,2)=\frac{d+1}{(d-1)!}\kappa_d^{d+1}$, we obtain for the constant $c_d$ in \eqref{eq:symmdiff}
\begin{equation*}
c_d
\le
2 d^{-2} \k_{d-1} \k_d^{-1 - \frac 1d }
\frac{\Gamma\left(d+1+\frac 1d\right)}{(d-1)!}
.
\end{equation*}
A similar computation and the use of the reverse H\"older inequality leads to a lower bound for $c_d$, 
$$
2 d^{-2} \k_{d-1} \k_d^{-1 - \frac 1d }\frac{\Gamma\left(d+1+\frac 1d\right)}{(d-1)!} \left[\frac 1{d \k_d} \frac{S(d,d,2)}{S(d,d,1)} \right]^{\frac {1}{d-1}}
\leq
c_d 
.  $$

\medskip
In contrast to the Poisson-Delaunay approximation, for the Poisson-Voronoi approximation the analogous dimension-dependent constant $c_d$ occurring in the formula for the symmetric difference could be calculated exactly, $c_d=2d^{-2}\kappa_{d-1}\kappa_{d}^{-1-1/d}\Gamma(1/d)$, see \cite{HevRei}\cite{ReiSpoZap}.
Because an elementary calculation shows that
$
\Gamma\left(d+1+\frac 1d\right)
\le e\Gamma\left(\frac{1}{d}\right) (d-1)!
$, which leads to 
\begin{equation*}
c_d
\le
2 e d^{-2} \k_{d-1} \k_d^{-1 - \frac 1d }
\Gamma\left(\frac{1}{d}\right),
\end{equation*}
the upper bound for $c_d$ for the Poisson-Delaunay approximation is larger by the factor $e $ than the constant for the Poisson-Voronoi approximation. It is unclear whether the difference is only an artefact of the method of approximation.

\subsection*{Acknowledgement}
Part of this work was done during a stay of MR at the Trimester Program \textit{Synergies between modern probability, geometric analysis and stochastic geometry} at the Hausdorff Research Institute for Mathematics (HIM) in Bonn (Germany).

\vspace{1cm}

\footnotesize

\textsc{Matthias Reitzner:} Institut f\"ur Mathematik, Universit\"at Osnabr\"uck, Germany \\
\textit{E-mail}: \texttt{matthias.reitzner@uni-osnabrueck.de}

\bigskip

\textsc{Anna Strotmann:} Institut f\"ur Mathematik, Universit\"at Osnabr\"uck, Germany \\
\textit{E-mail}: \texttt{anna.strotmann@uni-osnabrueck.de}

\end{document}